\numberwithin{equation}{section}
\newtheorem{thm}[equation]{Theorem} 
\newtheorem{prop}[equation]{Proposition}
\newtheorem{lma}[equation]{Lemma} 
\newtheorem{cor}[equation]{Corollary}
\theoremstyle{definition}
\newtheorem{df}[equation]{Definition}
\theoremstyle{remark}
\newtheorem*{pf}{Proof}
\newtheorem{rmk}[equation]{Remark}
\newtheorem{example}{Example}
\newtheorem{qtn}{Question}
\newcommand{\R}{{\mathbb{R}}}
\newcommand{\Z}{{\mathbb{Z}}}
\newcommand{\C}{{\mathbb{C}}}
\newcommand{\Q}{{\mathbb{Q}}}
\newcommand{\D}{{\mathbb{D}}}
\newcommand{\sm}[1]{C^\infty(#1)}
\newcommand{\delbar}{\overline{\partial}}
\newcommand{\til}[1]{\widetilde{#1}}
\newcommand{\wh}[1]{\widehat{#1}}
\newcommand {\ham} {\mathrm{Ham} (M, \omega)}
\newcommand {\hamcp} {\mathrm{Ham} (\mathbb{CP} ^{r-1}, \omega )}
\newcommand {\Cont} {\mathrm{Cont}}
\newcommand {\vM} {{(T^*)} ^{vert} \cM}
\newcommand{\om}{\omega}
\newcommand{\al}{\alpha}
\newcommand{\Om}{\Omega}
\newcommand{\cA}{\mathcal{A}}
\newcommand{\cD}{\mathcal{D}}
\newcommand{\cO}{\mathcal{O}}
\newcommand{\cE}{\mathcal{E}}
\newcommand{\cF}{\mathcal{F}}
\newcommand{\cH}{\mathcal{H}}
\newcommand{\cJ}{\mathcal{J}}
\newcommand{\cU}{\mathcal{U}}
\newcommand{\cQ}{\mathcal{Q}}
\newcommand{\cM}{\bm{M}}
\newcommand{\cP}{\bm{P}}
\newcommand{\cL}{\bm{L}}
\newcommand\vol{\operatorname{vol}}
\newcommand{\bP}{\mathbb{P}}
\DeclareMathOperator {\ho} {ho}
\DeclareMathOperator {\Proj} {P}
\renewcommand{\i}{\sqrt{-1}}
\DeclareMathOperator{\rank}{\mathrm{rank}}
\DeclareMathOperator{\ind}{\mathrm{ind}}
\DeclareMathOperator{\trace}{\mathrm{trace}}
\DeclareMathOperator{\image}{\mathrm{image}}
\DeclareMathOperator{\Sym}{\mathrm{Sym}}
\DeclareMathOperator{\Ham}{\mathrm{Ham}}
\DeclareMathOperator{\Aut}{\mathrm{Aut}}
\DeclareMathOperator{\Fred}{\mathrm{Fred}}
\DeclareMathOperator{\id}{\mathrm{1}}
\DeclareMathOperator{\coker}{coker}
\begin{document}







\title[K-theoretic invariants of Hamiltonian fibrations]{\textbf{K-theoretic invariants of Hamiltonian fibrations}}



\author{Yasha Savelyev}
\email{yasha.savelyev@gmail.com}
\address{CUICBAS, University of Colima, Bernal Díaz del Castillo 340,
Col. Villas San Sebastian, 28045 Colima, Mexico}

\author{Egor Shelukhin}
\email{egorshel@gmail.com, shelukhin@dms.umontreal.ca}
\address{Department of Mathematics and Statistics, University of Montreal, C.P. 6128, Succ. Centre-ville, Montreal, QC H3C 3J7, Canada}


\keywords{K-theory, twisted K-theory, $Spin^{c}$-Dirac operator, quantization, group of Hamiltonian diffeomorphisms, Kuiper's theorem, homotopy colimits}

\date{}


\begin{abstract}


We introduce new invariants of Hamiltonian fibrations with values in the
   suitably twisted K-theory of the base. Inspired by techniques of geometric
   quantization, our invariants arise from the family analytic index of a family
   of natural $Spin^c$-Dirac operators. As an application we give new examples
   of non-trivial Hamiltonian fibrations, that have not been previously detected
   by other methods. 
As one crucial ingredient we construct
a potentially new homotopy equivalence map, with a
certain naturality property, from $BU$ to the
space of index $0$ Fredholm operators on a Hilbert space, using
elements of modern theory of homotopy colimits.

 \end{abstract}

 \maketitle
 
 \tableofcontents



\section{Introduction and main results}




Since they were introduced in \cite{GLS}, Hamiltonian fibrations have been the subject of
active research in symplectic topology in the last few decades, 
see e.g. \cite{GLS,citeReznikovCharacteristicclassesinsymplectictopology,citePolterovichGromovs$K$-areaandsymplecticrigidity,LalondeMcDuffPolterovichTopRig,LalondeMcDuffFiber,KedraRestrictions,KedraMcDuff,citeSeidel$pi_1$ofsymplecticautomorphismgroupsandinvertiblesinquantumhomologyrings, citeMcDuffQuantumhomologyoffibrationsover$S2$,citeSavelyevQuantumcharacteristicclassesandtheHofermetric.,citeSavelyevBottperiodicityandstablequantumclasses.}
 and \cite{citeMcDuffSalamonIntroductiontosymplectictopology}. 
A Hamiltonian fibration is a topological fiber bundle $\cM$ over a base space\footnote{In this work a topological space will always be a paracompact Hausdorff topological space, with the associated category denoted by $Top$.} $B,$ with fiber given by a symplectic manifold $(M,\om),$ and structure group reduced to the group $\Ham(M,\om)$ of Hamiltonian diffeomorphisms of $(M,\om).$ 

One of the most basic examples of a Hamiltonian fibration is given by the
projectivization $\Proj(E) \to B$ of a complex vector bundle $E \to B$ of rank
$r$ over $B.$ Of course if $\Proj(E)$ is non-trivial as a complex projective-linear bundle, it is not immediate that it is non-trivial as a Hamiltonian fibration. Reznikov showed essentially the following by a suitable infinite-dimensional Chern-Weil theory in \cite{citeReznikovCharacteristicclassesinsymplectictopology} (since then a simpler, but ultimately equivalent, proof was found using characteristic classes of Hamiltonian fibrations - see \cite[Section 2.3]{McDuffLectures}).
 
\begin{thm}\label{thm:Reznikov-example} Let $E$ be a complex vector bundle on a topological space $B.$ If for each complex line bundle $U$ on $B,$ \[ch(E \otimes U) \neq ch(\underline{\C}^{r}),\] where $\underline{\C}^{r}$ denotes the trivial vector bundle of the same rank as $E,$ and $ch$ denotes the Chern character, then $\Proj(E)$ is non-trivial as a Hamiltonian fibration.\end{thm}

In this paper we study Hamiltonian fibrations further, developing new invariants and obtaining new results, inspired by geometric quantization. In particular, our construction yields the following lift of Reznikov's result to $K$-theory, providing new examples of non-trivial Hamiltonian fibrations.

\begin{thm}\label{thm:mini-example} Let $E$ be a complex vector bundle on a topological space $B.$ If for each complex line bundle $U$ on $B,$ $E \otimes U$ is not stably trivial, then $\Proj(E)$ is non-trivial as a Hamiltonian fibration.\end{thm}

This result is stronger than Theorem \ref{thm:Reznikov-example}, since the condition here can be rewritten as \[[E \otimes U] \neq [\underline{\C}^{r}]\] in $K^0(B),$ for all $U \in H^2(M,\Z),$ and the Chern character provides an isomorphism between the rational K-theory and the rational cohomology of $B$. This result is strictly stronger, as follows rather quickly from the fact that there exist complex vector bundles $E$ that are not stably trivial, and yet all their Chern classes vanish. See specific examples in Section \ref{section:examples}. Finally, to draw a classical analogy to this result, the projectivization of a complex vector bundle is trivial as a $PU(r)$-bundle if and only if the vector bundle is the twist by a line bundle of a trivial vector bundle. This is immediate from the central extension \[0 \to S^1 \to U(r) \to PU(r) \to 1.\]

Our new invariants of Hamiltonian fibrations are based on the observation that the theory of geometric quantization, when applicable, provides for each $k \in \Z_{>0}$ a homotopy-canonical (twisted, in general) family of elliptic differential operators on certain natural Hilbert bundles over $B.$ The analytic family index of this elliptic family furnishes the required invariants with values in the K-theory of the base. The family being homotopy-canonical follows from the contractibility of the space of $\om$-compatible almost complex structures on $M,$ a fact that is well-known, yet crucial, also in the theory of pseudo-holomorphic curves.

To describe these invariants in more detail we recall a few facts about geometric quantization. First of all, the basic object that we work with is a \emph{prequantization space}, which can be described as a $4$-tuple $\widehat {M}= (M, L, \nabla, \omega)$ for $L$ a complex Hermitian line bundle over a symplectic manifold $(M,\om)$ with a unitary connection $\nabla$ having curvature\footnote{\label{footnote1}This sign convention, while common in the quantization literature, is opposite to the one that is standard in the symplectic topology literature. In particular, the dual tautological line bundle $\cO(1) \to \C P^r$ with its canonical Chern connection, determined by the standard holomorphic and Hermitian structures, is a prequantization $\wh{\C P^r}$ of $\C P^r$ with the standard Fubini-Study symplectic form $\om_{FS}.$ To make this sign convention compatible with the usual statements on Hamiltonians and the automorphism group of the prequantization, one should use the sign convention $\iota_X \om = dH,$ for the Hamiltonian vector field $X$ of the Hamiltonian function $H$ on $M.$} $R(\nabla) = -i \om.$ Note that in order for such $(L,\nabla)$ to exist on a symplectic manifold $(M,\om),$ the cohomology class of the symplectic form must be representable by a class with integer coefficients. We call such symplectic manifolds {\em quantizable} and shall restrict our consideration to this class. 

In this setting one constructs for each $\om$-compatible almost complex structure $J,$ and $k \in \Z_{>0}$ a certain first order elliptic differential operator on the spaces of $L^{\otimes k}$-valued
$(0,*)$-differential forms on $M$, called the $Spin ^{c} $ Dirac operator $D^k$.
We  write $D ^{k} _{+} $ for the restriction of $D$ to differential forms
with even degree. Considering the formal difference of vector spaces, the index of $D^k _{+} $ can be thought to be a $\mathbb{Z} _{\geq 0} $-graded, virtual "Hilbert space" $$H ^{k} (\widehat{M})= [\ker D _{+}  ^{k}] -[\coker D _{+}  ^{k}].  $$ This is what we mean by \emph{quantization} in this paper, and the idea for this $K$-theoretic version of quantization is usually attributed to Bott (cf. \cite{BottQuantization}). Although we note that there are more detailed versions of quantization having various nice properties (cf. \cite{MaICM} and the references therein).


The main setup for family quantization in this paper is that of a continuous family of
prequantization spaces parametrized by a topological space $B,$ or more specifically, that
of {\em prequantum fibrations}: structure group $\cQ (\widehat{M})$ fibrations, where $\cQ(\widehat{M} )$ denotes the automorphism group of the prequantization space (not necessarily $id$ on the base). Each prequantum fibration is a line bundle over a unique Hamiltonian fibration. For example a suitable fiberwise blowup of a complex vector bundle furnishes its projectivization with a lift to a prequantum fibration. 

Our invariants are defined as follows. Given $k \in \Z_{>0},$ a (homotopy-canonical) choice of a continuous family of $\om$-compatible almost complex structures on the fibers of the Hamiltonian fibration canonically determines a family of $Spin^{c}$ Dirac operators, as above. The analytic index of this family of elliptic operators, as defined and studied by Atiyah-Singer in their celebrated series \cite{AtiyahSingerIV}, gives a K-theory class on the base $B.$ For each $k \in \Z_{>0},$ this class generalizes Bott's "virtual Hilbert space", does not depend on the choice of almost complex structures, and is consequently shown to be an invariant of the isomorphism class of the prequantum fibration. 
%
%
%


These K-theory invariants turn out to be rather powerful. In particular considering only the $H^{1}$ spaces (corresponding to $k=1$), we prove Theorem \ref{thm:mini-example}, and produce new examples of non-trivial prequantum and Hamiltonian fibrations. These invariants can also be used to prove that various other prequantum and Hamiltonian fibrations are non-trivial, for example those with fibers given by coadjoint orbits, but we defer these computations to further publications.

We deduce Theorem \ref{thm:mini-example} from the following stronger result on the level of classifying spaces, that is proven in Section \ref{sect:thm-exa-proof}. Set $\cQ (r) = \cQ (\widehat{\mathbb{CP}} ^{r-1} )$ for the group of automorphisms of the natural prequantization $\widehat{\mathbb{CP}} ^{r-1}$ of $\mathbb{CP}^{r-1}$ (see Footnote \ref{footnote1}). Let $B \cQ(r)$ be the classifying space in the category $Top$ of paracompact Hausdorff topological spaces for the group $\cQ(r),$ given by the Milnor construction \cite{citeMilnoruniversalbundles}. Let $\cH$ be a separable infinite-dimensional Hilbert space, and for $r \in \Z$ let $\Fred_r(\cH)$ be the space of Fredholm operators on $\cH$ of Fredholm index $r.$


\begin{thm}\label{thm:example}
There are natural maps $e,q, W_{r} $, with $e,q$ forming the sequence $$BU(r) \xrightarrow{e} B \cQ(r) \xrightarrow{q}
\Fred_r(\mathcal{H}),$$ with $W_{r} $ a weak equivalence and a homotopy commutative diagram:
\begin{equation*}
\xymatrix {BU (r) \ar [d]^{i _{r} }  \ar [r] ^{e} & B\cQ (r) \ar [d]
^{q} \\
BU \ar [r] ^{W _{r} } &  {\Fred _{r} (\mathcal{H})}.}
\end{equation*}
\end{thm}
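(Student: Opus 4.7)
The plan is to construct the three natural maps $e$, $q$, and $W_r$, and then verify homotopy commutativity via a direct computation of the $U(r)$-equivariant quantization of the standard prequantization of $\mathbb{CP}^{r-1}$.

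The map $e : BU(r) \to B \cQ(r)$ comes from the standard unitary action of $U(r)$ on $\C^r$: it restricts to an action on $S^{2r-1}$ preserving $\alpha_{std}$, giving a continuous homomorphism $U(r) \hookrightarrow \cQ(r)$, to which I apply the Milnor classifying space functor. The map $q : B \cQ(r) \to Fred_r(\mathcal{H})$ comes from the family $Spin^c$-Dirac index: a principal $\cQ(r)$-bundle $P \to B$ determines, via the tautological action of $\cQ(r)$ on $\widehat{\mathbb{CP}^{r-1}}$, a fiber bundle of prequantization spaces; the Atiyah--Singer family index construction from \cite{AtiyahSingerIV} applied to the resulting family of Dirac operators $D_+$, combined with a trivialization of the parametrized Hilbert bundle of $L^2$-sections (available since its structure group is contractible), produces $q$. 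The image lies in the index-$r$ component by Hirzebruch--Riemann--Roch.

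For $W_r$, the classical Atiyah--J\"anich theorem abstractly gives an equivalence $\Z \times BU \simeq Fred(\mathcal{H})$, but to make the full diagram commute I need a $W_r$ compatible with the compositions $q \circ e$ at every $r$. Following the strategy hinted at in the abstract, the idea is to realize $BU$ as $\operatorname{hocolim}_r BU(r)$ and assemble the maps $q \circ e$ into a map of diagrams which passes to a map of homotopy colimits via modern homotopy colimit machinery. The resulting map must then be shown to be a weak equivalence, for instance by comparison with Atiyah--J\"anich on each finite stage.

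Homotopy commutativity finally reduces to a representation-theoretic computation: the $U(r)$-equivariant index of $D_+$ on $\widehat{\mathbb{CP}^{r-1}}$ is $H^0(\mathbb{CP}^{r-1}, \mathcal{O}(1)) = \C^r$ with the standard representation, by equivariant Hirzebruch--Riemann--Roch. Hence for a $U(r)$-bundle $E \to B$, the composition $q \circ e$ classifies $E \times_{U(r)} \C^r$ in complex K-theory, which is the class classified by $W_r \circ i_r$ once $W_r$ is constructed as above. I expect the main obstacle to be the construction of $W_r$ with the required naturality: a generic Atiyah--J\"anich equivalence is only well-defined up to homotopy and a priori need not be compatible with the finite-$r$ quantization data, so the homotopy colimit construction is the key technical input.
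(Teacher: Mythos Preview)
Your proposal is essentially correct and follows the same overall strategy as the paper: $e$ comes from the homomorphism $U(r)\to\cQ(r)$, $q$ from the family $Spin^c$-Dirac index, $W_r$ from a homotopy-colimit construction of a weak equivalence $BU\to Fred_0(\cH)$, and commutativity from the identification $H^0(\bP(V),\cO(1))\cong V^*$ (Kodaira--Borel--Weil type computation), combined with Kuiper's theorem.

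Two minor remarks. First, the equivariant index is the \emph{dual} representation $(\C^r)^*$, not the standard one; the paper handles this by associating to a bundle $E$ the prequantum fibration over $\bP(E^*)$, so that the index recovers $E$. Second, the paper orders the argument slightly differently from what you sketch: rather than assembling the maps $q\circ e$ themselves into a homotopy-coherent diagram (which would require checking compatibility of Dirac families across the stabilizations $BU(r)\to BU(r+1)$), it first defines explicit elementary maps $\tilde p_r:BU(r)\to Fred_r(\cH)$ sending an $r$-plane $H\subset\cH$ to a Fredholm operator with kernel $H$ and zero cokernel, builds the co-cone and hence $W$ from these, and only afterwards identifies $q\circ e\simeq\tilde p_r$ via the Dirac computation and Kuiper's theorem. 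This makes the homotopy-coherence step purely linear-algebraic and avoids any analysis in the colimit construction.
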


The map $e$ above is induced by the canonical homomorphism $U (r) \to \cQ (r)
,$ and the map $i_r: BU(r) \to BU$ is the natural inclusion, and the weak equivalence $W_r$ is constructed in Section \ref{sect:infinity-categories}, as part of a proof of Theorem \ref{Proposition: homotopy equivalence commuting}. This theorem is a strengthening of the Atiyah-J\"{a}nich theorem, that is a key component in the proof of Theorem \ref{thm:example}. Theorem \ref{thm:example} implies additional results including the following (see Corollary \ref{cor:functors}).

\begin{thm}\label{thm:examplesimplified}
The map in complex K-theory induced by the natural inclusion map $BU (r) \to B\cQ (r) \equiv B \cQ (\widehat
{\mathbb{CP}}^{r-1} ) $, is surjective on K-theory.
\end {thm}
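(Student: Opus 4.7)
The plan is to deduce Theorem \ref{thm:examplesimplified} as an essentially formal consequence of Theorem \ref{thm:example} by applying the functor $K^{0}(-)$ to the given homotopy commutative square and chasing surjectivity.

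First I would apply complex K-theory contravariantly to the square of Theorem \ref{thm:example}, obtaining a commutative diagram
\begin{equation*}
\xymatrix{K^{0}(BU(r)) & K^{0}(B\cQ(r)) \ar[l]_{e^{*}} \\ K^{0}(BU) \ar[u]^{i_{r}^{*}} & K^{0}(Fred_{r}(\mathcal{H})) \ar[l]_{W_{r}^{*}} \ar[u]_{q^{*}}.}
\end{equation*}
Since $W_{r}$ is a weak equivalence, $W_{r}^{*}$ is an isomorphism; in particular, $K^{0}(Fred_{r}(\mathcal{H}))$ is identified with $K^{0}(BU)$, which is the classical target for K-theory classifying maps.

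Next I would invoke the classical fact that the restriction $i_{r}^{*}\colon K^{0}(BU) \to K^{0}(BU(r))$ is surjective. This is standard: the tautological virtual bundle on $BU$ restricts to $[\gamma_{r}] - r$ on $BU(r)$, and $K^{0}(BU(r))$ is topologically generated by $[\gamma_{r}]$ (equivalently, by its exterior powers, or by the Chern classes $c_{1},\dots,c_{r}$), all of which are pulled back from $BU$. I would cite Atiyah's \emph{K-theory} for the precise statement.

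Combining these two inputs, the commutativity of the square gives $e^{*} \circ q^{*} = i_{r}^{*} \circ W_{r}^{*}$. The right-hand side is the composition of an isomorphism with a surjection, hence surjective. Therefore $e^{*} \circ q^{*}$ is surjective, and a fortiori $e^{*}$ is surjective, which is the claim. The same argument goes through for $K^{1}$ if one wishes the full statement on $\Z/2$-graded K-theory.

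The only non-trivial input beyond Theorem \ref{thm:example} itself is the surjectivity of $i_{r}^{*}$ on K-theory, which is standard but worth stating carefully; the rest is a formal diagram chase. No further analytic or geometric machinery is needed at this stage, since all of the Dirac-operator content has been packaged into the existence of the map $q$ and the weak equivalence $W_{r}$ provided by the preceding theorem.
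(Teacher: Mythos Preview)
Your proposal is correct and is essentially identical to the paper's own argument: immediately after proving Theorem \ref{thm:example}, the paper applies the contravariant functor $K^*(-)$ to the square, invokes the classical surjectivity of $i_r^*$ (citing \cite[Theorem 16.32]{SwitzerBook} and \cite[Lemma 4.3]{AdamsGeneralizedHomologyBook}), and concludes that $e^*$ is surjective by exactly the diagram chase you describe. The only cosmetic difference is the choice of reference for the surjectivity of $i_r^*\colon K^*(BU)\to K^*(BU(r))$.
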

This strengthens a theorem of Spacil \cite{SpacilThesis}, based on Reznikov-type \cite{citeReznikovCharacteristicclassesinsymplectictopology} characteristic classes obtained by infinite-dimensional Chern-Weil theory for the strict contactomorphism group, that \[e^*: H^*(B \cQ(r); \R) \to H^*(B U(r); \R)\] is a surjection.

In general, a Hamiltonian fibration $\cM$ need not admit a lift to a prequantum fibration. This is measured by a class $\eta_{\cM} \in H^3(B,\Z)$ in the cohomology of the base $B,$ called the Dixmier-Douady class. Therefore, when this class does not vanish, a certain twisting for this class becomes necessary to define our K-theoretic invariants of Hamiltonian fibrations (in fact even in the case of vanishing Dixmier-Douady class, the twisted invariant gives a more conceptual picture). Indeed, we are able to produce a canonical invariant of $\cM$ with values in {\em twisted K-theory}, and prove Theorem \ref{thm:twistedmain}, that is a partial analogue of Theorem \ref{thm:example} in the twisted setting.


In Section \ref{section:discussion} we prove Proposition \ref{prop: cw quantized}, inspired by family quantization and the family Atiyah-Singer theorem, that the Calabi-Weinstein invariant \cite{WeinsteinCalabi} of loops of automorphisms of prequantization spaces has image in $\frac{1}{n+1} \Z,$ an integrality statement that was open since the work of Banyaga and Donato \cite{BanyagaDonato}. Consequently, we show in Proposition \ref{Proposition:DD class torsion} that the Dixmier-Douady class of a Hamiltonian fibration (with quantizable fibers) is always $(n+1)$-torsion.

\medskip

{\bf Push-forward in complex K-theory.} In the case of smooth fibrations, one can interpret our construction as the push-forward of a natural (twisted in general) line bundle over the total space $\cM$ of a Hamiltonian fibration corresponding to a (perhaps defined only locally in $B$) prequantum fibration over $\cM.$ In the case when the prequantum fibration is defined globally, and hence the line bundle is not twisted, the push-forward (also known as fiber-integration, Umkehr, or Gysin) map in complex K-theory can be defined by classical topology using the Thom isomorphism, Bott periodicity, and embedding techniques (see e.g. \cite[Chapters IV,V]{KaroubiBook}). For the recently studied push-forward in twisted K-theory we refer to \cite{CareyWang}. We note that the computation of our invariants seems to require the analytic definition which we present in full detail in this paper, and does not seem to be evident from the classical construction of the push-forward.

As a remark on the literature, we note that family $Spin^c$ quantization was considered for different purposes by Zhang \cite{ZhangReductionFamilies}, and deformation-quantization of symplectic fibrations was studied in \cite{OlgaKr}. Examples of the relation between quantization and symplectic topology appear in \cite{PolterovichNoise,PolterovichUnsharpness,DonaldsonApprox}.

\medskip


{\bf Organization of the paper.} In Section \ref{sect:prelim} we recall the preliminary material necessary for our arguments and constructions: prequantum fibrations in Section \ref{sect:prequant-fib}, $Spin^c$-Dirac operators in Section \ref{sect:Spinc-Dirac}, and analytic family index in Section \ref{sect:family-index}. Section \ref{sect:quantization-Bott} introduces our main $K$-theoretic invariants. Section \ref{sect:thm-exa-proof} proves Theorem \ref{thm:example}. Section \ref{section:examples} discusses new examples of non-trivial Hamiltonian and prequantum fibrations. Section \ref{sect:super-twisted} introduces a twisted version of our invariants and proves Theorem \ref{thm:twistedmain}. Section \ref{section:discussion} discusses topics related to quantization of prequantum fibrations, and proves Proposition \ref{prop: cw quantized} and Proposition \ref{Proposition:DD class torsion}. Finally, Section \ref{sect:infinity-categories} proves Theorem \ref{Proposition: homotopy equivalence commuting}, which is an indispensable step in the proof of Theorems \ref{thm:example} and \ref{thm:twistedmain}, by means of the theory of infinity categories.

\section{Preliminaries}\label{sect:prelim}

\subsection{Prequantum fibrations}\label{sect:prequant-fib}

To set up geometric quantization on a symplectic manifold
$(M_0,\om_0)$ one requires that there be a complex Hermitian line
bundle $(L_0,h_0)$ over $M_0$ with a unitary connection $\nabla_0$ having
curvature $R(\nabla_0) = -i \om_0.$ This is equivalent to the vanishing of the image of
the class of the symplectic form in $H^2(M_0,\R/\Z)$ 
\cite[Proposition 8.3.1]{woodhouse}. We call such symplectic manifolds
{\em quantizable}, and the symplectic manifolds we consider in this
paper are all of this kind. A given prequantization of $(M_0, \omega_0)$ will be denoted by
$\widehat{M}_0 $. Let $\wh{p}: L_0 \to M_0$ be the projection. The group of automorphisms $\cQ(\wh{M}_0)$ of $\wh{M}_0$ is defined to be the identity component of the subgroup of those diffeomorphisms $\wh{\phi}$ of the total space of $L_0$ that are bundle maps, that is $\wh{p} \circ \wh{\phi} = \phi \circ \wh{p}$ for a diffeomorphism $\phi$ of $M_0,$ restrict to unitary maps on the fibers of $L_0 \to M_0,$ and preserve the connection $\nabla_0$: \[\phi_* \nabla_0 = \nabla_0.\]
In an alternative equivalent description, one requires that there be a principal $S^1$-bundle $P_0$ over $M_0$ with connection one-form $\al_0$ with curvature $d\al_0 = -\til{\om}_0,$ where $\til{\om}_0$ is the lift of $\om_0$ to $P_0$ by the projection map $p: P_0 \to M_0$ coming from the bundle structure. In this description, the automorphism group is given by $\cQ(P_0)=\Cont_0(P_0,\al_0),$ the identity component of the group of strict contactomorphisms of $(P_0, \al_0),$ those diffeomorphisms of $P_0$ that preserve the form $\alpha_0$. Note that $\cQ(P_0) \cong \Aut_0(L_0,h_0,\nabla_0) \cong \cQ(\wh{M}_0).$ It is standard (see \cite[Section 1.13]{KostantPrequant}, and \cite{BanyagaRegularContact}) that $\cQ(P_0)$ is a central $S^1$-extension of $\Ham(M_0,\om_0):$ \begin{equation}\label{eqn:Quant-Ham-extension} 1 \to S^1 \to \cQ(P_0) \xrightarrow{pr} \Ham(M_0,\om_0) \to 1.\end{equation} Here the projection $pr(\phi) \in \Ham(M_0,\om_0)$ of $\phi \in \cQ(P_0)$ is given by $pr(\phi):M_0 \to M_0,$ $x \mapsto p \circ \phi (y)$ for any $y \in p^{-1}(x).$ 


We would like to consider the same situation of prequantization in families - hence we would like a notion of prequantization of a Hamiltonian fibration (cf. \cite{GLS}). We recall one of the several equivalent definitions of a Hamiltonian fibration. 

\begin{df}[Hamiltonian fibration]\label{Definition: Hamiltonian
   fibration} A Hamiltonian fibration is a fiber bundle $M
   \hookrightarrow \cM \xrightarrow{\pi} B,$ where $B \in Top$  and $(M,\om)$ is a symplectic manifold, with structure group reduced to $\ham$.
   \end{df}

\begin{df}[Prequantum fibration]\label{Definition: prequantum fibration}
For a Hamiltonian fibration $\pi: \cM \to B,$ with fiber $(M,\om),$ $(\{\om_b\}_{b \in B})$
denoting the fiberwise symplectic structure, a \emph{prequantum lift}
is a Hermitian line bundle $\cL \to \cM$, with a continuous choice of  unitary connection
$\nabla_b$ on each fiber $\cL_b \to \cM_b$ over $b \in B$ with curvature $R(\nabla_b) \in \Om^2(\cM_b,i\R)$
equal to $-i \om_b,$ and moreover the structure group of $\cL \to B$ is $\cQ(\wh{M}),$ for a given fixed prequantization $\wh{M} = (L,\nabla,M,\om)$ of $(M,\om),$ compatible with the natural homomorphism $\cQ(\wh{M}) \to \Ham(M,\om).$ A Hamiltonian fibration with a given prequantum lift will be called a \emph{prequantum
fibration}, and a given prequantum lift of $\cM \to B$ will be denoted
by $\widehat{\cM} $.
\end {df}

Alternatively, a prequantum fibration is a bundle with model fiber $p: P_0 \to M_0,$ a prequantization (principal $S^1$-bundle) of a symplectic manifold, with structure group $\cQ(P_0)=\Cont_0(P_0,\al_0).$ 

Given a Hamiltonian fibration $\cM \to B$ with model fiber
$(M_0,\om_0)$ that has a prequantization $(L_0,h_0,\nabla_0)$ (that is
$[\om_0] = 0$ in $H^2(M_0,\R/\Z)$), the existence of a prequantum lift
$\widehat{\cM} $ of $\cM$ is controlled by a certain class in $H^2(B,\R/\Z) \cong H^3(B,\Z)$ (see \cite[Chapter 4]{BrylinskiBook} for a detailed discussion, and further references). This class is called the Dixmier-Douady class of the fibration $\cM \to B.$ Briefly, it is given as the image under the connecting map $H^2(M_0, \R/\Z) \to H^3(M_0,\Z)$ (associated to the coefficient exact sequence $0 \to \Z \to \R \to \R/\Z \to 0$) of a class in $H^2(M_0, \R/\Z)$ represented by the following $S^1$-valued \v{C}ech $2$-cocycle. Let $\cU = \{U_a\}$ a cover of $B.$ Consider $c_{ijk} = \wh{g}_{ij} \wh{g}_{jk} \wh{g}_{ki}$ where $\wh{g}_{ab}$ is an arbitrary lift of the structure map $g_{ab}: U_a \cap U_b \to \Ham(M_0,\om_0)$ of $\cM \to B$ with respect to a trivialization over $\cU$ to a map $\wh{g}_{ab}: U_a \cap U_b \to \cQ(\wh{M}_0).$ Note that since $pr(c_{ijk}) = g_{ij} g_{jk} g_{ki} = \id,$ by central extension \eqref{eqn:Quant-Ham-extension}, $c_{ijk}$ defines an $S^1$-valued $2$-cochain, which is rather easily checked to be a cocycle. In Section \ref{section:discussion} we prove the following statement which appears to be new.

\begin{prop}\label{Proposition:DD class torsion}
The Dixmier-Douady class of any Hamiltonian fibration with quantizable fibers is torsion.
\end{prop}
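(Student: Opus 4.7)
The plan is to combine the rationality of the coupling class of a Hamiltonian fibration with the multiplicative naturality of the Dixmier--Douady class under tensor powers of the prequantization.

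In the Leray--Serre spectral sequence of $M \hookrightarrow \cM \to B$ with real coefficients, the class $[\om]_\R \in E_2^{0,2}$ survives to $E_\infty$ --- this is the classical existence of the coupling class for Hamiltonian fibrations. Comparing the rational and real spectral sequences via the injection $\Q \hookrightarrow \R,$ each differential $d_r([\om]_\Q)$ is a $\Q$-class whose image in $\R$-coefficients is $d_r([\om]_\R) = 0,$ and hence already vanishes rationally. Thus $[\om]_\Q$ survives to $E_\infty^{0,2}$ and lifts to some $c_\Q \in H^2(\cM, \Q)$ with fiber restriction $[\om]_\Q.$

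I would then choose a positive integer $N$ so that $N c_\Q$ lies in the image of $H^2(\cM, \Z) \to H^2(\cM, \Q),$ pick a preimage $c_\Z \in H^2(\cM, \Z),$ and absorb the torsion discrepancy between $c_\Z|_{\cM_b}$ and $N[\om]_\Z$ in $H^2(M, \Z)$ (well-defined since, on a connected $B,$ the group $\Ham$ acts trivially on integer cohomology of the fiber) by replacing $c_\Z$ with $K c_\Z,$ where $K$ is the order of this torsion class; setting $N' = KN,$ the resulting class has fiber restriction exactly $N'[\om]_\Z.$ Since topological line bundles are classified by $c_1,$ this is the first Chern class of a Hermitian line bundle $\cL' \to \cM$ with $\cL'|_{\cM_b} \cong L^{\otimes N'}$ on every fiber. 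Picking an arbitrary unitary connection on $\cL'$ and correcting it fiberwise by a continuous family of 1-forms --- obtainable locally by the fiberwise Poincar\'e lemma and glued via a partition of unity on $B$ --- I get fiberwise curvature equal to $-i N' \om_b.$ This produces a prequantum lift of $\cM \to B$ with respect to the prequantization $\wh{M}^{\otimes N'}$ of $(M, N'\om),$ and consequently the Dixmier--Douady class $DD_{\wh{M}^{\otimes N'}}(\cM \to B)$ vanishes.

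Finally, the tensor-power homomorphism $\phi \mapsto \phi^{\otimes N'} \colon \cQ(\wh{M}) \to \cQ(\wh{M}^{\otimes N'})$ covers the identity on the quotient $\Ham$ and is the $N'$-th power map on the central $S^1,$ exhibiting the extension classifying $\cQ(\wh{M}^{\otimes N'})$ as the pushout of the one for $\cQ(\wh{M})$ along $z \mapsto z^{N'}.$ At the level of Dixmier--Douady classes in $H^3(B\Ham, \Z)$ this yields $DD_{\wh{M}^{\otimes N'}} = N' \cdot DD_{\wh{M}},$ so $N' \cdot DD_{\wh{M}}(\cM \to B) = 0$ and hence $DD_{\wh{M}}(\cM \to B)$ is $N'$-torsion. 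The main technical hurdle I anticipate is arranging the continuous family of fiberwise connections with prescribed curvature in the middle step; conceptually, however, the only place where the Hamiltonian structure enters essentially is the rationality of the coupling class, and this, combined with multiplicativity of the Dixmier--Douady class under tensor powers, immediately forces torsion.
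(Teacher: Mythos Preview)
Your overall strategy---multiplicativity of the Dixmier--Douady class under tensor powers, combined with integrality coming from the rational coupling class---is sound and genuinely different from the paper's proof. The paper instead works directly with the central extension: using the natural Lie-algebra splitting $C^\infty_0(M,\R)\hookrightarrow C^\infty(M,\R)$ it builds a subgroup $\wh{\Ham}(M,\om)\subset\cQ(P,\al)$ surjecting onto $\Ham(M,\om)$, and then uses the integrality of the Calabi--Weinstein invariant (Corollary~\ref{Corollary: cw quantized}) to show the kernel is contained in $\mu_{n+1}$. This yields a finite reduction of the $S^1$-extension and hence an explicit universal torsion bound dividing $n+1$, whereas your bound depends on denominators in the rational coupling class.

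There is, however, a genuine gap in your middle step. Producing a line bundle $\cL'$ on $\cM$ with $c_1(\cL')|_{\cM_b}=N'c_1(L)$ and fiberwise connections $\nabla_b$ of curvature $-iN'\om_b$ does \emph{not} by itself yield a $\cQ(\wh{M}^{\otimes N'})$-structure. For that you need each $(\cL'|_{\cM_b},\nabla_b)$ to be isomorphic \emph{as a line-bundle-with-connection} to $(L^{\otimes N'},\nabla^{N'})$---not merely to have the same curvature. When $H^1(M;\Z)\neq 0$ the space of prequantizations with a given curvature, modulo gauge, is a nontrivial torus $H^1(M;\R)/H^1(M;\Z)$, and your partition-of-unity correction lands you somewhere in this torus with no control. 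Concretely, your data only reduces the structure group to $\Aut(L^{\otimes N'})|_{\Ham}$ (bundle automorphisms covering $\Ham$), and the further reduction to $\cQ(\wh{M}^{\otimes N'})$ is obstructed by a class in $H^1(B;H^1(M;\Z))$; passing to higher tensor powers multiplies this obstruction but need not kill it. For simply connected $M$ (e.g.\ $\mathbb{CP}^{r-1}$) the gap disappears and your argument goes through, but for general quantizeable $(M,\om)$ you would need an additional argument---for instance, showing directly that $d_2(c_1(L))=0$ in the integral Leray--Serre spectral sequence and then identifying the DD class with $d_3(c_1(L))$.
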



\subsection{The $Spin^c$-Dirac operator} \label{sect:Spinc-Dirac}
We briefly recall the construction and main properties of the $Spin^c$ Dirac operator. For more details see e.g. \cite{DuistermaatLefschetzBook, MaICM, MaMarinescuBook,GinzburgGuilleminKarshon}. Given a symplectic manifold $(M,\om),$ let $L$ be a Hermitian line bundle over $M$ with a Hermitian connection $\nabla^L.$ We later consider the case when $(L,\nabla^L)$ is a prequantization of $(M,\omega).$

Then a choice of an $\om$-compatible almost complex structure $J$ on $M$ canonically determines the $Spin^c$-Dirac operator as follows. Consider $(TM,J)$ as a complex vector bundle with Hermitian structure given by $(\om,J)$ - in particular we consider the $J$-invariant Riemannian metric  $g_J(\cdot,\cdot) = \om(\cdot,J\cdot).$ 
We remark that $(TM,J) \cong TM^{(1,0)},$ the $\i$-eigenbundle of $J \otimes 1$ acting on the complexification $TM \otimes_{\R} \C,$ and $TM^{(0,1)} \cong \overline{TM}^{(1,0)}$ is the $(-\i)$-eigenbundle of this action. Then $TM \otimes_{\R} \C \cong TM^{(1,0)} \oplus TM^{(0,1)}$ Similarly we define $T^*M^{(1,0)}, T^*M^{(0,1)}.$ 
This Hermitian vector bundle carries a canonical Hermitian connection $\nabla^{Ch}$, which is uniquely characterized by the properties $\nabla^{Ch} \om =0, \nabla^{Ch} J = 0,$ and $T_{\nabla^{Ch}}^{(1,1)} = 0.$  This induces a connection $\nabla^{Ch,n}$ on $K = T^*M^{(n,0)}.$ Denote \[E=E_{(TM,J)}=\Lambda(T^*M^{(0,1)}) = \bigoplus_{0 \leq q \leq n} \Lambda^{(0,q)} (T^*M).\]

\begin{rmk}
In general any other $J$-invariant Riemannian metric $g$ on $M,$ and any Hermitian connection on $K$ would define a $Spin^c$-Dirac operator. However, in what follows we use the above specific metric $g_J$ and Hermitian connection $\nabla^{Ch,n},$ because they are determined canonically by the pair $(\om, J).$
\end{rmk}

Given the connections $\nabla^L$ and $\nabla^{Ch}$ we construct the
$Spin^c$-Dirac operator \[D ^{1}  (L, J ): \Gamma(L \otimes E; M) \to \Gamma (L \otimes E; M)\] acting on the smooth sections of $L \otimes E$ over $M$ as follows. 

One first shows that $E$ and hence $L\otimes E$ is a Clifford module over the bundle of complexified Clifford algebras $C(TM) \otimes_{\R} \C.$ The Clifford action of $v \in T^{(1,0)} M, \overline{v} \in T^{(0,1)} M$ on $E$ is \[c(v) = \sqrt{2}\cdot \overline{v}^* \wedge, \] \[c(\overline{v}) = -\sqrt{2}\cdot  \iota_{\overline{v}},\] where  $\overline{v}^* \in T^* M^{(0,1)}$ is the metric dual of $v.$

One further shows that $\nabla^{Ch,n}, \nabla^L$ and the Levi-Civita
connection of $g_J$ give a connection $\nabla^{Cl}$ on $L \otimes E$
compatible with the Clifford module structure (a Clifford connection).
Then for $\eta \in \Gamma(L \otimes E)$ and $x \in M$ we define \[D
^{1}  (L, J )(\eta)_x = \sum_{j=1}^{2n} e_j \cdot \nabla^{Cl}_{e_j}(\eta)_x,\] where $(e_j)_{j=1}^{2n}$ is an orthonormal local frame in $TM_x$ and for $e \in TM_x,$ $e \cdot = c(e)$ denotes the Clifford module action. The operator $D ^{1}  (L, J )$ is an elliptic first order operator that is self-adjoint with respect to a natural inner product on $\Gamma(L \otimes E).$ Put \[E^q=\Lambda^{(0,q)} (T^*M).\] Then under the splitting $E =  E^{-} \oplus E^{+},$ of $E$ into \[E^{+} = \displaystyle\bigoplus_{q \; \text{even}} E^q \] and \[E^{-} = \displaystyle\bigoplus_{q \; \text{odd}} E^q ,\] the operator $D
^{1}  (L, J )$ splits as   

\[D^{1}  (L, J ) = \begin{pmatrix}
 0 & D
^{1} _{+}   (L, J ) \\ D
^{1} _{-}   (L, J ) & 0 
\end{pmatrix},\] for operators \[D
^{1} _{+}   (L, J ): \Gamma(L \otimes E^+) \to \Gamma(L \otimes E^-),\] \[D
^{1} _{-}   (L, J ): \Gamma(L \otimes E^-) \to \Gamma(L \otimes E^+).\]


For an integer $k \geq 1$ put \[D ^{k}(L, J) = D^1(L^k,J),\] \[D ^{k}_{\pm}(L, J) = D^1_{\pm}(L^k,J),\] where $L^k$ is endowed with the Hermitian structure and connection induced from those on $L.$ When considering a specific prequantization $\wh{M} = (L,\nabla,M,\om)$ of $(M,\om),$ we use the notation $D^{k}(\wh{M}, J), D ^{k}_{\pm}(\wh{M}, J)$ for $D^{k}(L, J), D ^{k}_{\pm}(L, J).$

We require the following calculation of this operator for K\"{a}hler manifolds. Given a holomorphic Hermitian line bundle $L$ over a K\"{a}hler manifold $M,$ consider the Chern connection $\nabla^L$ on $L,$ and the K\"{a}hler structure $(\om,J)$ on $M.$ Then there is a natural $\delbar$-operator $\delbar_{L\otimes E}:\Gamma(L\otimes E) \to \Gamma(L\otimes E).$ The inner product on $\Gamma(L\otimes E)$ induces the adjoint operator $\delbar^*_{L\otimes E}:\Gamma(L\otimes E) \to \Gamma(L\otimes E).$ The sum \[\delbar_{L\otimes E} +\delbar^*_{L\otimes E}:\Gamma(L\otimes E) \to \Gamma(L\otimes E)\] is sometimes called the Dolbeault-Dirac operator. Then we have the following identity for the $Spin^c$-Dirac operator (see \cite[Theorem 1.4.5]{MaMarinescuBook}, observing that the relevant torsion tensor vanishes in the K\"{a}hler case). 

\begin{prop}\label{Proposition: Dirac Dolbeault}
\[D^{1}  {(L, J )} = \sqrt{2}(\delbar_{L\otimes E} + \delbar^*_{L\otimes E}).\]
\end{prop}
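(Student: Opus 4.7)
The plan is to verify this Kahler-specific identity pointwise in an adapted local frame, using two ingredients special to the Kahler setting: (i) the Levi-Civita connection of $g_J$ coincides with the Chern connection on $(TM,J)$, so the Clifford connection $\nabla^{Cl}$ on $L \otimes E$ is exactly the tensor-product Chern connection built from $\nabla^L$ and the induced Chern connection on $E = \Lambda T^*M^{(0,1)}$; and (ii) on any holomorphic Hermitian bundle the $\delbar$-operator equals the $(0,1)$-part of the Chern connection.

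Around a point $x \in M$ I would choose a real orthonormal frame of $TM$ of the form $(e_1, Je_1, \ldots, e_n, Je_n)$ with associated local unitary frame $w_j = \tfrac{1}{\sqrt{2}}(e_j - \i J e_j)$ of $TM^{(1,0)}$, conjugate frame $\bar w_j$ of $TM^{(0,1)}$, and metric-dual coframe $\bar w_j^* \in T^{*(0,1)}M$. By the Kahler condition the frame can be arranged so that $\nabla^{Ch} w_j |_x = 0$. Inverting the defining relations as $e_j = (w_j + \bar w_j)/\sqrt{2}$ and $J e_j = \i(w_j - \bar w_j)/\sqrt{2}$, and substituting into $D^1(L,J) = \sum_j \bigl(c(e_j)\nabla^{Cl}_{e_j} + c(Je_j)\nabla^{Cl}_{Je_j}\bigr)$ using the given Clifford action formulas $c(w_j) = \sqrt{2}\, \bar w_j^* \wedge$ and $c(\bar w_j) = -\sqrt{2}\,\iota_{\bar w_j}$, an algebraic expansion in which the ``mixed'' $\bar w_j^* \wedge \nabla^{Cl}_{w_j}$ and $\iota_{\bar w_j}\nabla^{Cl}_{\bar w_j}$ terms cancel yields
\[ D^1(L,J) \;=\; \sqrt{2} \sum_{j=1}^n \Bigl(\bar w_j^* \wedge \nabla^{Cl}_{\bar w_j} \;-\; \iota_{\bar w_j} \nabla^{Cl}_{w_j}\Bigr). \]

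By (i)--(ii), the first sum is precisely the action of $\delbar_{L\otimes E}$ on sections. It remains to identify the second sum with $\delbar^*_{L\otimes E}$. I would verify this by an integration-by-parts computation against the Hermitian volume form at $x$: because the Christoffel symbols of the Chern connection vanish at $x$ in the adapted unitary frame, the formal adjoint of $\sum_j \bar w_j^* \wedge \nabla^{Cl}_{\bar w_j}$ carries no zeroth-order correction at $x$ and reduces to exactly $-\sum_j \iota_{\bar w_j} \nabla^{Cl}_{w_j}$. Since $x$ was arbitrary, the identity holds globally.

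The principal obstacle is controlling the zeroth-order remainder when taking the formal adjoint; the Kahler condition is what guarantees the existence of a unitary frame that is simultaneously parallel at $x$ for the Chern connection and compatible with the Hermitian volume form, so that no divergence-of-the-frame terms survive. A more invariant alternative would invoke the classical Kahler identity $[\Lambda,\delbar] = -\i \nabla^{*(1,0)}$ to reduce the adjoint computation to the trivially twisted case, but the local-frame argument above is self-contained and keeps track of the factor $\sqrt{2}$ transparently.
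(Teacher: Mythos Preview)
The paper does not actually supply a proof of this proposition: it is stated as a known identity, with the reader referred earlier to standard references such as \cite{DuistermaatLefschetzBook, MaMarinescuBook} for the construction and properties of the $Spin^c$ Dirac operator. So there is no ``paper's own proof'' to compare against.

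Your argument is correct and is essentially the standard textbook computation (see e.g.\ Ma--Marinescu, \emph{Holomorphic Morse Inequalities and Bergman Kernels}, Theorem~1.4.5). The local-frame expansion you wrote down is right: substituting $e_j=(w_j+\bar w_j)/\sqrt{2}$, $Je_j=\i(w_j-\bar w_j)/\sqrt{2}$ into $\sum_k c(e_k)\nabla^{Cl}_{e_k}$ and using the given Clifford rules, the cross terms $\bar w_j^*\wedge\nabla^{Cl}_{w_j}$ and $\iota_{\bar w_j}\nabla^{Cl}_{\bar w_j}$ cancel and one is left with $\sqrt{2}\sum_j(\bar w_j^*\wedge\nabla^{Cl}_{\bar w_j}-\iota_{\bar w_j}\nabla^{Cl}_{w_j})$. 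Your identification of the two pieces with $\delbar$ and $\delbar^*$ is also correct, and you have put your finger on exactly where the K\"ahler hypothesis enters: (i) it forces the Levi-Civita and Chern connections to agree, so $\nabla^{Cl}$ really is the tensor Chern connection and its $(0,1)$-part is $\delbar$; and (ii) it lets you choose a unitary frame with vanishing Chern Christoffel symbols at $x$, killing the divergence correction in the formal adjoint. One minor remark: in the adjoint step it is slightly cleaner to first take the adjoint as $-\nabla^{Cl}_{w_j}\circ\iota_{\bar w_j}$ and then commute the two factors using $\nabla^{Cl}_{w_j}\bar w_j|_x=0$, rather than asserting the commuted form directly; but this is cosmetic.
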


Therefore \[\ker D
^{1}  (L, J ) \cong H^*(M;\cO(L)),\] the cohomology of the sheaf $\cO(L)$ of holomorphic sections of $L.$ In fact \[\ker D
^{1}  (L, J )|_{\Gamma(L \otimes E^q)} \cong H^q(M;\cO(L)).\] 

\begin{cor}\label{Corollary:Diraccomputationforprojective}
When $M=\bP (V)$ for a Hermitian complex vector space $V$ with the standard
K\"{a}hler structure and $L = \cO(1),$ the dual tautological bundle, endowed as above with the Chern connection, \[\ker D^1_-{(L,J)}
\cong \mathrm{coker} D_+^1(L,J) = 0\] and \[\ker D_+
^{1}  (\widehat{M} , J ) = \ker D
^{1} _{+}   (\widehat{M}, J ) \cong V^*,\]
where the isomorphism is canonical. 

\end{cor}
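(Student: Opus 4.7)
The plan is to apply Proposition \ref{Proposition: Dirac Dolbeault} and then reduce the statement to a classical sheaf-cohomology computation on projective space. Since $M = \bP(V)$ with the Fubini-Study form is Kähler and $L = \cO(1)$ is a holomorphic Hermitian line bundle equipped with its Chern connection, the proposition identifies
\[D^1(L,J) = \sqrt{2}\,(\delbar_{L \otimes E} + \delbar^*_{L \otimes E}).\]
By standard Hodge theory on compact Kähler manifolds, the kernel of $\delbar + \delbar^*$ acting on $\Gamma(L \otimes E^q)$ is the space of harmonic $(0,q)$-forms with values in $L$, canonically isomorphic to the Dolbeault cohomology $H^{0,q}(M;L) \cong H^q(M; \cO(L))$.

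Next I would invoke the classical computation of the cohomology of the dual tautological bundle on projective space (a special case of the Bott formula, or equivalently Serre's original computation): $H^q(\bP(V); \cO(1)) = 0$ for all $q \geq 1,$ while $H^0(\bP(V); \cO(1)) \cong V^*$ canonically. The canonical isomorphism sends a linear functional $\xi \in V^*$ to the section of $\cO(1)$ whose value at a line $[v] \in \bP(V)$ is the restriction $\xi|_{\C v}$, viewed as an element of the fiber $(\C v)^* = \cO(1)_{[v]}$. Combining with the previous step,
\[\ker D^1_+(L,J) \;\cong\; \bigoplus_{q\ \text{even}} H^q(\bP(V); \cO(1)) \;\cong\; V^*,\]
\[\ker D^1_-(L,J) \;\cong\; \bigoplus_{q\ \text{odd}} H^q(\bP(V); \cO(1)) \;=\; 0.\]

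Finally, to identify $\coker D^1_+(L,J)$ with $\ker D^1_-(L,J)$, I would use that $D^1(L,J)$ is formally self-adjoint and the off-diagonal pieces $D^1_+$ and $D^1_-$ are therefore formal adjoints of each other. Since $D^1_+$ is elliptic, elliptic regularity together with this adjointness gives $\coker D^1_+(L,J) \cong \ker D^1_-(L,J) = 0$, completing the corollary. The only piece that is not entirely formal is the identification of the kernel of the Dolbeault-Dirac operator with sheaf cohomology and the explicit canonical identification $H^0(\bP(V);\cO(1)) \cong V^*$, but both of these are classical and available in any standard reference such as \cite{MaMarinescuBook}.
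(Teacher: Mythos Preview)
Your proof is correct and follows essentially the same route as the paper's: apply Proposition \ref{Proposition: Dirac Dolbeault} to identify $\ker D^1$ with the sheaf cohomology $H^*(\bP(V);\cO(1))$, then cite the classical vanishing of $H^{q\geq 1}(\bP(V);\cO(1))$ and the canonical identification $H^0(\bP(V);\cO(1))\cong V^*$. You are simply more explicit than the paper in spelling out the Hodge-theoretic step, the description of the canonical isomorphism, and the self-adjointness argument giving $\coker D^1_+ \cong \ker D^1_-$.
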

\begin{proof}  
 All the non-zero sheaf cohomology of $\cO(1)$ vanishes (cf. \cite[Corollary 9.1.2]{KempfBook}, \cite[p. 156]{GriffithsHarris}). On the other hand $H ^{0} (M, \cO
  (L)) $ is the space of global holomorphic sections of $L,$ which by elementary algebraic geometry
  is canonically identified with the dual space of $V$.
\end{proof}

\begin{rmk}
If we replace $L$ by $L^k = \cO(k)$ in Corollary \ref{Corollary:Diraccomputationforprojective}, then the conclusion is the same with the sole difference that $\ker D
^{k} _{+}   (L, J ) = \ker D^{1} _{+}   (L^k, J ) \cong \Sym^k V^*.$
\end{rmk}

\subsection{The family analytic index} \label{sect:family-index}


Let $\cM \to B$,  be a locally trivial
fibration whose fibers are smooth manifolds, and whose structure group is reduced to the diffeomorphism group of the general fiber. Let $\cE, \cF \to \cM$ be two complex vector bundles, whose
restrictions to the fibers of $\cM \to B$ are smooth complex vector
bundles. For each $b \in B,$ let $\mathcal{D}_{b}:  \Gamma(\cE_b; \cM_b) \to \Gamma(\cF_b; \cM_b)$ be an order $1$ elliptic operator between the smooth
sections of the two vector bundles over the manifold $\cM_b.$ 
The family $\mathcal{D} \equiv \{\mathcal{D} _{b} \}$ is required to be
continuous with respect to the natural Frechet topology induced by the $C
^{\infty} $ topology on the bundles $\Gamma(\cE; \cM)$, $\Gamma(\cF;
\cM)$, with corresponding fibers over $b$: $\Gamma(\cE_b; \cM_b)$,
$\Gamma(\cF_b; \cM_b)$, see Atiyah-Singer \cite{AtiyahSingerIV}.
We shall call this family $\mathcal{D} = \mathcal{D} (\cE, \cF)$ of operators an \emph{Atiyah-Singer family}. 

We show how to associate to this situation an index in $K_0(B):=[B,\Fred
(\mathcal{H})],$ where $\Fred (\mathcal{H}) \simeq BU \times \mathbb{Z} $
denotes the space of Fredholm operators on a separable
infinite-dimensional Hilbert space\footnote{This is what we call a Hilbert space in this paper.}.
Assume that the
fibration $\cM$ is endowed with a fiber-wise smooth measure, and the bundles
$\cE \to \cM$ and $\cF \to \cM$ are Hermitian vector bundles. This endows the
fiber-wise smooth section bundles $\Gamma(\cE;\cM)$, $\Gamma(\cF;\cM)$, 
with a natural fiber-wise inner product.
 Indeed consider the  bundles of Hilbert spaces
$\cH_1=L^2_1(\cE;B)$ and $\cH_0=L^2(\cF;B)$ over $B$ given by the completion of
the corresponding fiber-wise section spaces. The family $\cD$ of elliptic
operators induces a Fredholm map \[[\cD]: \cH_1 \to \cH_0\] of  Hilbert-bundles
over $B.$ Since by Kuiper's theorem \cite{Kuiper} the unitary group of a Hilbert space is
contractible, the Hilbert bundles $\cH_1$ and $\cH_0$ can be trivialized, and
moreover these trivializations are \emph{homotopy canonical}, which
means, throughout the paper, that the space of choices (in this case of
trivializations) is contractible.
Choose unitary trivializations \[\Phi_1: \cH_1 \to \cH \times B,\;\Phi_0: \cH_0 \to \cH \times B.\] Then \[\Phi_0 \circ [\cD] \circ (\Phi_1)^{-1}: \cH \times B \to \cH \times B\] is a Fredholm bundle-map from the trivial $\cH$-bundle over $B$ to itself, and hence gives a map \[ f_{\Phi_0 \circ [\cD] \circ \Phi_1}: B \to \Fred(\cH).\] Define the family analytic index of the family $\cD$ as the homotopy class \[[f_{\Phi_0 \circ [\cD] \circ \Phi_1}] \in [B, \Fred(\cH)]\] which is well-defined, because the trivializations were homotopically canonical. 

 As mentioned above $\Fred(\cH)$ is well known to be weakly equivalent to $BU \times \Z.$
  We shall require a stronger version of this statement, where we ask that the
  weak equivalence homotopically commute with certain natural maps $p_r: BU(r)
  \to \Fred_0(\cH) = \ind^{-1}(0)$ (for the Fredholm index map $\ind:
  \Fred(\cH) \to \Z$), and the natural maps $i_r: BU(r) \to BU.$ Consider the
  model of $BU(r)$ as the Grassmannian $Gr(r,\cH)$ of $r$-planes in a fixed
  infinite-dimensional separable Hilbert space $\cH.$ This gives us the map
  $\widetilde{p}_{r}:   BU(r) \to \Fred_r(\cH) = \ind^{-1}(r)$ as follows. To an $r$-plane $H \subset \cH$ we
  associate a Fredholm operator $\widetilde{p}_{r}  (H): \cH \to \cH$ sending $H$ to $0$, and the orthogonal complement $H^\perp$ of $H$ isometrically onto $\cH.$ The choice of the latter isometry is determined by a section of the bundle of unitary maps $U(\tau^\perp,\underline{\cH})$ from the orthogonal complement $\tau^\perp$ of the tautological bundle $\tau$ over $Gr(r,\cH)$ (in the trivial $\cH$-bundle $\underline{\cH}$), to the trivial bundle $\underline{\cH}.$ This section exists and is homotopically-canonical, since by Kuiper's theorem the fiber of $U(\tau^\perp,\underline{\cH})$ is contractible. We also have index shift maps
\begin{equation} \label{eq:indexshift}
s _{r}: \Fred _{r}(\cH)  \to \Fred _{0}(\cH),
\end{equation}
defined by sending an operator $O$  to the operator
\begin{equation*}
   \cH \xrightarrow{O \oplus 0} \cH \oplus \mathbb{C}^{r}
   \xrightarrow{iso _{r} } \cH,
\end{equation*}
with the second map a fixed isometry.
Define $$p _{r} = s _{r} \circ \widetilde{p}_{r}. $$

An important technical role in this paper is played by the following strengthening of the Atiyah-J\"{a}nich theorem, that we prove in Section \ref{sect:infinity-categories}.

\begin{thm}\label{Proposition: homotopy equivalence commuting}
There exists a weak equivalence $W: BU \to \Fred_0(\cH)$ such that $W \circ i_{r} \simeq p _{r} $.
\end{thm}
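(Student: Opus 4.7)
The plan is to realize $W$ as the map induced by the system $\{p_r\}_r$ on a homotopy colimit presentation of $BU$. Since each $j_r:BU(r)\hookrightarrow BU(r+1)$ is a closed cofibration between CW complexes, there is a natural weak equivalence $\operatorname{hocolim}_r BU(r) \xrightarrow{\sim} \operatorname{colim}_r BU(r) = BU$. It therefore suffices to construct a map $\til W:\operatorname{hocolim}_r BU(r)\to\Fred_0(\cH)$ whose $r$-th stage is $p_r$, and then compose with a homotopy inverse to this equivalence to obtain $W$.

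First, I would verify the pairwise compatibility $p_{r+1}\circ j_r \simeq p_r$ for each $r$. Both maps $BU(r) \to \Fred_0(\cH)$ are assembled from a homotopically canonical section of the bundle $U(\tau^\perp,\underline{\cH})$ over the Grassmannian (whose fibers are contractible by Kuiper's theorem) together with an index-shift isometry $\cH\oplus\C^r\to\cH$, resp.\ $\cH\oplus\C^{r+1}\to\cH$. The two shifts differ by one extra $\C$ summand, and the homotopy is produced by interpolating through an auxiliary isometry $\cH\oplus\C\cong\cH$, whose space of choices is once again contractible.

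The main step is to upgrade these pairwise homotopies into a coherent diagram so that they descend to a single map out of $\operatorname{hocolim}$. Working in a cofibrant model for the sequential diagram (for instance the mapping telescope, or a simplicial bar-construction model as in the modern homotopy-colimit formalism), the obstruction to extending a partial coherence datum at stage $n$ to stage $n+1$ is a section of a bundle whose fibers are spaces of isometries between infinite-rank Hilbert bundles. By Kuiper's theorem all such fibers are contractible, so these obstructions vanish inductively. Organizing these contractible choices functorially across the whole diagram --- rather than merely at each level --- is where modern machinery (e.g.\ functorial cofibrant replacement for diagrams of topological spaces) is essential, and I expect this coherence construction to be the principal technical obstacle.

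Finally, the composite $W:BU\to\Fred_0(\cH)$ satisfies $W\circ i_r \simeq p_r$ by construction. To check that $W$ is a weak equivalence, note that the odd homotopy groups of both $BU$ and $\Fred_0(\cH)$ vanish. A generator of $\pi_{2k}(BU) = \Z$ lies in the image of $\pi_{2k}(i_k)$, so under $W_*$ it is sent to the class of $S^{2k}\to BU(k)\xrightarrow{p_k}\Fred_0(\cH)$, whose analytic index is the reduced tautological virtual bundle $[\tau_k]-k$. By Bott's theorem this generates $\til K^0(S^{2k})\cong\pi_{2k}(\Fred_0(\cH))$, so $W$ induces an isomorphism on every homotopy group and is therefore a weak equivalence.
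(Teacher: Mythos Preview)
Your proposal is correct and follows essentially the same strategy as the paper: present $BU$ as the (homotopy) colimit of the sequence $BU(r)$, assemble the maps $p_r$ into a homotopy-coherent co-cone using Kuiper's theorem to kill all obstructions, and extract $W$ from the universal property. The paper phrases the coherence step in the language of quasi-categories (co-cones on $N(D)\to\mathcal{T}$ in the sense of Lurie) rather than the mapping-telescope/bar-construction model you invoke, but the underlying content is identical, and you correctly identify the higher-coherence extension as the main technical point. The one genuine difference is in the verification that $W$ is a weak equivalence: the paper appeals to the standard fact (Atiyah) that any map from a compact base into $\Fred_0(\cH)$ can be homotoped so that the kernels form a finite-rank subbundle of constant dimension, whereas you compute $W_*$ directly on $\pi_{2k}$ via Bott periodicity. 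Both arguments are valid; yours is arguably more explicit.
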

The shift maps $s _{r} $ are homotopy equivalences, whose homotopy
inverses $s ^{-1}_{r}  $ can be constructed by hand. In fact these
maps are just representatives of the canonical shift maps in $\pi_0$.
That is $\pi _{0} (\Fred (\cH))= \mathbb{Z} $ as a group, with with the isomorphism
given by the index map. In particular fixing an element $e _{r} \in
\Fred (\cH) $ with index $-r$, we have homotopy equivalences:
\begin{align*}
& \overline{s}_{r}: \Fred _{r}  (\cH) \to \Fred _{0} (\cH) \\
& \overline{s}_{r} (O) = O \cdot e _{r},
\end{align*}
with $\cdot$ the multiplication in the $H$-space $\Fred (\cH)$. The
homotopy inverse is given by multiplication with the adjoint operator of $e
_{r}$.
The maps $s _{r} $ are easily seen to be homotopy equivalent to
$\overline{s}_{r}  $, using Kuiper's theorem.
We define 
\begin{equation} \label{eq:Wr}
W _{r} = s _{r}^{-1} \circ W.
\end{equation}

Then it is an easy consequence of Theorem \ref{Proposition: homotopy equivalence commuting} that 
\begin{equation} \label{eq:Wr-pr}
W_{r} \circ i_r \simeq \til{p}_r.
\end{equation}

\section{Quantization of prequantum fibrations} \label{sect:quantization-Bott}

In \cite{BottQuantization} Bott has suggested to use the index virtual space
$[\ker D_+ (\widehat{M}, J )] -[\coker D_+ (\widehat{M},J )]$ as the
quantization for the symplectic manifold $(M,\om)$ (cf. \cite{MaICM}).
The main tool of this paper is a family version of such quantization,
which becomes naturally K-theory valued.

Let there be given a Hamiltonian fibration $\cM \to B$   with a prequantum lift $\widehat{\cM} = (\cM,\cL,\nabla,\{\om_b\}_{b \in B})$ (recall Definition \ref{Definition: prequantum fibration}). 
%
%
Consider the associated bundle $\cJ_{\cM}
\to B$ with structure group $\Ham(M),$ whose fiber $(\cJ_{\cM})_b$ over
$b \in B$ is the space $\cJ_{\cM_b}$ of $\om_{\cM_b}$-compatible almost
complex structures on $\cM_b.$ This bundle has contractible fibers and
therefore has global sections. Pick a global section $\{J_b\} \in
\Gamma(\cJ_{\cM}; B).$ Denote by $T ^{vert} \cM $ the vertical tangent
bundle to the fibration $\cM \to B,$ consider it as a complex
vector bundle using $\{J_b\},$ and let ${(T^*)} ^{vert} \cM$ be its dual bundle. Denote by $E$ the corresponding exterior algebra bundle $\Lambda(\vM^{(0,1)}),$ and put
$E^+ = \Lambda^{even}(\vM^{(0,1)})),$  $E^- =
\Lambda^{odd}(\vM^{(0,1)})).$ To the data \[(\widehat{\cM}  \to B,
\{J_b\},k), \; k \in \Z_{>0}\] there corresponds a natural Atiyah-Singer
family ${D_+ ^{k}}  (\widehat{\cM},   \{J_b\} )$, that is the family of
operators:
\begin{equation} \label{eq:Dirac}
{D_+^{k}}  (\widehat{M} _{b}  ,  \{J_b\} ):
\Gamma(\mathcal{E} ^{+} _{b} ) \to
\Gamma(\mathcal{E} ^{-} _{b}  )
\end{equation}
where
$\mathcal{E} ^{+} =  \cL^k \otimes E^+$, and $\mathcal{E} ^{-}
= \cL^k \otimes E^-$. 
%
%

\begin{prop}\label{Propostion: K-theory}
   The family analytic index \[H^k(\widehat{\cM} )= [\ind(D_+^k
   (\widehat{\cM}, \{J _{b} \} ))] \in K^0(B)\]
of the elliptic Atiyah-Singer family, with  $\ind(D^k_+
   (\widehat{\cM}, \{J _{b} \} ))$ denoting the (homotopy-canonically defined) induced map from $B$ to $\Fred
   (\cH)$,
 does not depend on the choice of
the family $\{J_b\}$, and moreover is invariant under isomorphisms of prequantum fibrations. 
\end{prop}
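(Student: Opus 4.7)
The plan is to deduce both statements from the homotopy invariance of the family analytic index applied to certain Atiyah--Singer families built over contractible spaces of choices.

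\textbf{Independence of $\{J_b\}$.} Each fiber $\cJ_{\cM_b}$ of $\cJ_\cM \to B$ is contractible, hence so is the space of global sections $\Gamma(\cJ_\cM;B)$, and any two sections $\{J_b^0\}, \{J_b^1\}$ are connected by a continuous path $\{J_b^s\}_{s \in [0,1]}$. Pulling $\widehat\cM \to B$ back along the projection $B \times [0,1] \to B$ gives a prequantum fibration $\til{\widehat\cM} \to B \times [0,1]$, into which the path $\{J_b^s\}$ assembles as a single section of $\cJ_{\til\cM}$. The corresponding Dirac operators form an Atiyah--Singer family whose family index is a class in $K^0(B \times [0,1])$. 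Restricting along the two endpoint inclusions $i_0, i_1 : B \hookrightarrow B \times [0,1]$ produces $[\ind(D_+^k(\widehat\cM,\{J_b^0\}))]$ and $[\ind(D_+^k(\widehat\cM,\{J_b^1\}))]$ respectively, and these coincide in $K^0(B)$ because $i_0^* = i_1^*$ by homotopy invariance of $K$-theory.

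\textbf{Invariance under isomorphism.} For an isomorphism $\phi: \widehat\cM \to \widehat\cM'$ of prequantum fibrations covering the identity of $B$, pick a section $\{J'_b\}$ of $\cJ_{\cM'}$ and transport it to a section $\{J_b := \phi^* J'_b\}$ of $\cJ_\cM$. Because the construction of the $Spin^c$-Dirac operator is canonically functorial in the quintuple $(M, \om, J, L, \nabla^L)$, the derivative of $\phi$ intertwines the Clifford bundles $\cE^\pm$ and $(\cE')^\pm$ together with their Clifford connections, fiber-wise and continuously in the base. Pulling back sections by $\phi$ therefore yields a fiber-wise unitary isomorphism of Hilbert bundles $L^2(\cE^\pm; B) \cong L^2((\cE')^\pm; B)$ that conjugates one Dirac family into the other, so the two Fredholm families represent the same homotopy class in $[B, \Fred(\cH)]$. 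Combined with the previous step applied on both sides, this shows that the index depends only on the isomorphism class of $\widehat\cM$.

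\textbf{Main obstacle.} The one subtle point is verifying the continuity hypothesis needed to apply the Atiyah--Singer family formalism: as $(b,s)$ varies in $B \times [0,1]$, the data $g_{J^s_b}$, $\nabla^{Ch,n}$, and $\nabla^{Cl}$ entering the construction of $D_+^k$ must vary continuously in the Fr\'echet $C^\infty$ topology relevant for operators, so that $D_+^k(\til{\widehat\cM}, \{J^s_b\})$ is a continuous family in the sense of \cite{AtiyahSingerIV}. Analogously, for $\phi$ one must check that the pull-back on sections is genuinely a continuous map of Hilbert bundles after normalizing the fiber-wise volume forms. These checks are standard since all ingredients are explicit algebraic and differential-geometric functionals of smoothly varying data, but they are what deserves the most careful attention in writing out the argument.
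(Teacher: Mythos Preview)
Your proof is correct and follows essentially the same approach as the paper: independence of $\{J_b\}$ via homotopy invariance of the family index over the contractible space of compatible almost complex structures, and isomorphism invariance via the observation that an isomorphism $\Phi$ intertwines $D_+^k(\widehat{\cM}',\{J'_b\})$ with $D_+^k(\widehat{\cM},\Phi^*\{J'_b\})$, reducing to the first step. The paper's own argument is a two-sentence sketch of exactly this; your version simply spells out the pullback to $B\times[0,1]$ and the continuity checks in more detail.
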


\begin{pf} 
The proof of this claim follows directly from the homotopy invariance
of the analytic family index, and the observation that if $\Phi:
\widehat{\cM} 
\to \widehat{\cM}'$ is an isomorphism of prequantum fibrations lifting an isomorphism $\phi: \cM \to {\cM}'$ of Hamiltonian fibrations, then $\Phi^*D_+^k
(\widehat{\cM}', \{J _{b} \} )= D_+ ^{k}  (\widehat{\cM},  \phi ^{*}\{ J _{b} \}  )$ and hence the isomorphism invariance follows from the independence on the family of almost complex structures.
\end{pf}

\section{Application to $\Cont_0(S^{2r-1},\alpha_{std})$}\label{sect:thm-exa-proof}
Consider $\C^r$ with its standard complex structure and Hermitian
metric. Note that the unitary group $U(r)$ of this Hermitian structure
embeds into $\cQ(r) \equiv \cQ (\widehat{\C P} ^{r}).$ This follows for example from the fact that the Chern connection on $\cO(-1) \to \C P^{r-1} = \bP(\C^r)$ is invariant under the action of $U(r)$ on the total space $\C^r\setminus \{0\}$ of $\cO(-1),$ and hence the same statement is true about the induced action on $\cO(1) \to \C P^{r-1}.$ This implies that by extending the
structure group, every complex (Hermitian) vector bundle $V$ over $B$ can be considered as a prequantum fibration. Specifically, we associate to $V$ the prequantum fibration $\cO(1)_{\bP(V)} \xrightarrow{\pi} \bP(V) \to B,$ where over a point $b \in B,$ the corresponding projection $\pi_b : \cO(1)_{\bP(V_b)} \to \bP(V_b)$ is isomorphic to the dual tautological line bundle $\cO(1) \to \C P^{r-1},$ that is the complex line bundle associated by the $1$-dimensional $\C^*$-representation $z \mapsto z^{-1}$ to the principal $\C^*$-bundle $\C^r \setminus 0 \to \C P^{r-1}$ sending a point to the line it spans. The connection on each fiber is the Chern connection. This determines a prequantum lift $\wh{\bP(V)}$ of the Hamiltonian fibration $\bP(V) \to B.$

%
Given a finite-rank complex vector bundle $E$ over $B$, we consider the prequantum fibration $\wh{\bP(E^*)}.$ We denote $\cL_E = \cO(1)_{\bP(E^*)}$ and $\cM_E = \bP(E^*).$ We claim that the quantization procedure $E \to H^1(\cL_E)$ recovers the class of $E$ in the $K$-theory of $B.$ We prove Theorem \ref{thm:example} directly, since the prequantum analogue of Theorem \ref{thm:mini-example} is an immediate special case, while the proof is nearly identical. Moreover Theorem \ref{thm:mini-example} is a quick corollary, as explained in Section \ref{section:examples}.

%
\begin{proof} [Proof of Theorem \ref{thm:example}]
 The map $e$ is the map induced by the canonical homomorphism $U
 (r) \to \cQ (r)$.  
The map $W _{r} $ is as in \eqref{eq:Wr}.
The map $q: B \cQ(r) \to \Fred (\mathcal{H})$ is the (homotopically-canonically defined) index map $ind
(D_+ ^{1} (\widehat{P} _{B \cQ (r)},\{J_b\}))$, where $\widehat{P} _{B \cQ (r)} $ denotes the
   universal $\widehat{\C P} ^{r-1}  $ fibration over $B \cQ (r)$ and $\{J_b\}_{b \in B \cQ(r)}$ is a section of the bundle of compatible almost complex structures on the fibers of the corresponding Hamiltonian $\C P^{r-1}$ fibration.
%
%
We now show that
   $q \circ e$ and $W_r \circ i _{r} $ are homotopy equivalent.
Let $E \to BU (r)$ denote the tautological rank $r$ Hermitian vector
bundle (recall that $BU (r)$ for us is the space of $r$-planes in
$\mathcal{H}$).  
We note that the corresponding prequantum fibration $\cL_E \to \cM_E
\to BU (r)$
comes equipped with an almost complex structure $\{j_x\} \in \cJ_{\cM_E}$ such that
on each fiber over a point $x \in BU(r),$ $j_x$ is a standard complex structure on
$\cM_E = \bP(E^*)$ coming from a standard complex structure on $E^*.$ Note that in this case \[\ker D_+^1 (\widehat{\C P} ^{r-1} _{x} , j _{x}) - \coker D_+^1
   (\widehat{\C P} ^{r-1} _{x}, j _{x} )  =
H^0(\bP(E^*_x); \cO(1)_{\bP(E^*_x)}) = E_x,\] where the isomorphisms are canonical,
by Corollary \ref{Corollary:Diraccomputationforprojective}, and there is no cokernel
hence the proof is finished by Kuiper's theorem. More
specifically we may conclude by Kuiper's theorem that the fibration $\mathcal{E} \to BU (r)$, with fiber the space of
Fredholm operators on $\mathcal{H}$ with vanishing cokernel and kernel
$E$, has contractible fibers. Next we observe that $q \circ e$ and
$\til{p}_r \simeq W_r \circ \iota_r$ are both sections of this fibration, and so must be
homotopic. 
\end {proof}

We immediately obtain the following consequence.

\begin{cor}\label{cor:functors}
For each contravariant functor $F$ for which $F(\iota_r): F(BU\times \Z) \to F(B U(r))$ is a surjection, $F(e): F(B \cQ(r)) \to F(B U(r))$ is an injection, and for each covariant functor $G$ for which $G(\iota_r): G(B U(r)) \to G(BU\times \Z)$ is an injection, $G(e): G(B U(r)) \to G(B \cQ(r))$ is an injection.
\end{cor}

\begin{rmk}
Examples of such contravariant functors are given by $B \mapsto H^*(B; \Z)$ \cite[Theorem 16.10, Corollary 16.11]{SwitzerBook} or $B \mapsto K^*(B)$ \cite[Theorem 16.32]{SwitzerBook}, \cite[Lemma 4.3]{AdamsGeneralizedHomologyBook}, and examples of such covariant functors are given by $B \mapsto H_*(B; \Z)$ \cite[Proof of Theorem 16.17]{SwitzerBook} or $B \mapsto \pi_k(B) \otimes \Q,$ \cite[Section 14.7]{GriffithsMorganBook}.
\end{rmk}
\begin{rmk} This generalizes a theorem of Spacil \cite{SpacilThesis}, based on Reznikov-type \cite{citeReznikovCharacteristicclassesinsymplectictopology} characteristic classes obtained by infinite-dimensional Chern-Weil theory for the strict contactomorphism group, stating that $e^*: H^*(B \cQ(r); \R) \to H^*(B U(r); \R)$ is a surjection.
\end{rmk}

We note that Theorem \ref{thm:example} may be interpreted as saying that the map $e$ ``almost''
admits a weak retraction (i.e. a left inverse in the homotopy category).  We therefore end this section with the following question, which we find interesting.


\begin{qtn} Does the map $e: BU (r) \to B \cQ (r)$ admit a weak
   retraction?
\end{qtn}

Finally, we deduce Theorem \ref{thm:mini-example} from Theorem \ref{thm:example}.

\begin{proof}[Proof of Theorem \ref{thm:mini-example}]
First of all Theorem \ref{thm:example} immediately implies that if a complex vector bundle $E$ is not stably trivial, then $\wh{\bP(E)}$ is non-trivial as a prequantum fibration.

The case of $\bP(E)$ as a Hamiltonian fibration is similar, with the difference that a Hamiltonian fibration with vanishing Dixmier-Douady class is trivial if and only if any of its prequantum lifts is obtained from a line bundle over the base of the fibration (and hence every such lift is obtained from a line bundle). This is a direct consequence of the central extension \[0 \to S^1 \to \cQ(r) \to \Ham(\C P^{r-1}) \to 1.\] This shows that Theorem \ref{thm:mini-example} follows from Theorem \ref{thm:example}. 
\end{proof}

\begin{rmk}
Theorem \ref{thm:mini-example} is also a direct consquence of Theorem \ref{thm:twistedmain}.
\end{rmk}



\section{Examples of non-trivial fibrations} \label{section:examples}
Recall that $\cQ (r) = \cQ (\widehat{\mathbb{CP}} ^{r-1} ).$ Using Theorem \ref{thm:mini-example}, and its analogue for prequantum fibrations (that are special cases of Theorem \ref{thm:example} and Theorem \ref{thm:twistedmain}) we provide a new example of a non-trivial $U(r)$ vector bundle that remains non-trivial as a $\cQ(r)$-fibration, and a new example of a vector bundle with non-trivial projectivization that remains non-trivial as a Hamiltonian fibration. These examples are not detected by Reznikov-type classes, vertical Chern classes or the coupling class (see e.g. \cite[Section 2.3]{McDuffLectures}). 



These examples come from the fact that there are stably non-trivial complex vector bundles all of whose Chern classes vanish. Hence their non-triviality is detected by their class in $K$-theory, which by Theorem \ref{thm:example} persists when passing to their isomorphism class as prequantum fibrations, while it is not detected by characteristic classes. For instance, we present the following two concrete examples.



\begin{example}
Let $X = \R P^6$ or $\R P^7.$ Let $\tau_\R$ be the tautological real line bundle over $B = X.$ Let $L = \tau_\R \otimes_\R \C$ be the complexification of $\tau_\R.$ Note that $c:=c_1(L)$ is the non-trivial element in $H^2(B) \cong \Z/2\Z.$ Put $H =L -1$ in complex $K$-theory. It is a well-known computation \cite[Corollary 6.47]{KaroubiBook} that the reduced $K$-theory $\til{K}^0(B) \cong \Z/8\Z,$ with generator $H.$ Hence the rank $4$ complex vector bundle $E = L \otimes \C^4$ over $B$ is stably non-trivial. Hence this provides an example of a non-trivial $\cQ(r)$-fibration. One the other hand, $c(E) = (1 + c)^4 = 1 + 4 c + 6 c^2 + 4 c^3 + c^4 = 1 + c^4 = 1,$ since $2c = 0$ and $H^8(B) = 0.$ Therefore this example is new. Note that $\bP(E) = \bP(\C ^4)$ is trivial, by definition of $E.$
\end{example}

\begin{example}
For the case of a non-trivial $\Ham(\C P^{r-1})$-fibration we modify the first example as follows. Let the base be $B = X \times X.$ Denote $E_1 = \pi_1^* E,$ $E_2 = \pi_2^* E,$ $L_1 = \pi_1^* L,$ $L_2 = \pi_2^* L.$ Let the vector bundle be $E = E_1 \oplus E_2.$ We claim that the projectivization $\bP(E)$ of $E$ is non-trivial. The group $Pic(B) \cong H^2(B,\Z)$ of isomorphism classes of line bundles on $B$ is isomorphic to $\Z/2\Z \times \Z/2\Z$ with generators $L_1,L_2.$ It is easy to verify that $U \otimes E$ is stably non-trivial for all $U \in Pic(B),$ whence the class of $U \otimes [E] - 8$ in reduced $K$-theory is non-zero for all $U \in Pic(B).$ By Theorem \ref{thm:twistedmain} this implies that $\bP (E)$ is non-trivial as a $\Ham(\C P^7)$-fibration. 

Finally, the definition of Chern classes via the Leray-Hirsch theorem (cf. \cite[Proof of Theorem 16.2]{SwitzerBook}) going back to Grothendieck \cite{GrothendieckChern} rests on the fact that, denoting by $u = c_1(T^*)$ the first Chern class of the dual tautological line bundle $T^* = \cO(1)_{\bP (E)}$ over $\bP (E),$ we
have $u^{r} + c_1(\til{E}) u^{r-1} + \dots + c_{r-1}(\til{E}) u +
c_r(\til{E}) = 0.$  Here $\til{E} = \pi_B^* E$ for the projection $\bP(E) \xrightarrow{\pi_B} B.$ Hence $u^{r} = 0,$ and moreover $u$ restricts to the class of the symplectic form on the fibers of $\bP(E) \xrightarrow{\pi_B} B.$ Hence it is the coupling class of the Hamiltonian fibration. Moreover, we note that the vertical tangent bundle $T^{vert} \bP(E)$ of the fibration $\bP(E) \xrightarrow{\pi_B} B$ is isomorphic to $Hom(T,\til{E}/T) \cong T^* \otimes \til{E}/T$ and hence its Chern classes are calculated via $u$ and $c(E) = 1.$ Therefore, the non-triviality of this fibration is not detected via (fiber-integrals) of polynomials in the vertical Chern and coupling classes.
\end{example}

\section{Super twisted K-theory and  topology of $B \text{Ham} (\mathbb{CP}^{r-1} , \omega)$.} \label{sect:super-twisted}
In this section we develop invariants of Hamiltonian fibrations that have values in twisted K-theory, and show that the natural map  $k: BU (r) \to B \text{Ham} (\mathbb{CP} ^{r-1}
, \omega)$, obtained by projectivizing the dual bundle,  is injective on homotopy groups in the ``stable range''. 
This is originially proved for rational homology (and rational homotopy) by Reznikov in
\cite{citeReznikovCharacteristicclassesinsymplectictopology} using infinite
dimensional Chern-Weil theory. It also has a proof via Gromov-Witten
theory, see first author's
\cite{citeSavelyevBottperiodicityandstablequantumclasses.}. We now give a proof
via the techniques of this paper. We remark that the injection holds still for all $k > 1,$ by the work of Casals-Spacil. We outline the implication of their work in the end of this section.


\subsection {The super twisted Dirac K-theory class}
In the previous sections we constructed a natural Atiyah-Singer family over $B
\cQ (\widehat{M} )$. We show that there is likewise a twisted or projective
Atiyah-Singer family over $B \ham$, when $M$ has a prequantum lift
$\widehat{M}= (M, L, \nabla^L,  \omega)$. We associate to this
family a super twisted K-theory class - the Dirac class, which we then show to be a
meaningful new invariant.  We refer to
\cite{AtiyahSegalTwistedK,AtiyahSegalTwistedK2} for more information
about twisted K-theory. The super analogue here is a certain simple graded
variant of this construction, which is completely geometrically natural
from our setup. While this is not necessary for our applications, we then show that in the particular setup of this
paper, this super twisted K-theory and the Dirac class we define can be described using classical twisted K-theory, a more classical invariant.

Using the super twisted K-theory version of our quantization method we then obtain the following variant of the main theorem for the group
$\hamcp$.

\begin{thm} \label{thm:twistedmain}

Let $k: BU(n) \to B \hamcp$ be given by the natural map $U(n) \to \hamcp,$ and $k_*:[B,BU(n)] \to [B, B\hamcp]$ be the induced map, for a topological space $B.$ Consider \[\image k_* \subset [B, B \hamcp].\] There is a ``twisted
quantization'' map $twq$ and a commutative diagram:
\begin{equation*}
   \xymatrix{[B, BU (r)]/ Pic (B) \ar [dr] ^{(W_r \circ i _{r}) _{*} }  \ar [r] & \image k_*  \ar [d]^{twq}\\ 
   & [B, \Fred_r (\cH)]/ Pic (B)},
\end{equation*}
where the notation  $[B, BU (r)]/ Pic (B)$, means the quotient of $[B, BU (r)]$ as a {\em set} by the (natural) action of the topological Picard group $Pic (B)$ of $B.$
\end{thm}

\begin{rmk}
Note that $[B, BU (r)]/ Pic (B) \cong [B, BPU(r)]_{\eta=0},$ the set of isomorphism classes of $PU(r)$-bundles with vanishing Dixmier-Douady class. Guided by this observation and Proposition \ref{Proposition:DD class torsion}, it would be interesting to formulate and prove an extension of Theorem \ref{thm:twistedmain} to the case of non-trivial Dixmier-Douady classes. 
\end{rmk}

\begin{cor}
  The  map $k: BU (r) \to B \hamcp$ is injective on $\pi _{n} $ in
  the range, $2 < n \leq 2r$. 
\end{cor}
\begin{proof} 
   Suppose the class of $f: S ^{n} \to BU (r) $ is non-trivial in
   homotopy. For $n > 2$ the Picard group of $S ^{n} $ is trivial.
   Consequently  $twq (k_* [f]) $ is
   non-trivial if $(W_r \circ i _{r}) _{*} [f]  $ is
   non-trivial, but this is the case since we are in the stable range
   $n \leq 2r$. Recall that the stable range is characterized by the condition that the natural map $\pi_n BU(r) \to \pi_n BU$ is an isomorphism. This can be seen to hold true whenever $n \leq 2r$ by \cite[Lemma 23.4]{citeMilnorMorsetheory}, and the natural isomorphism $\pi_n BU(r) \cong \pi_{n-1} U(r)$ (for $n>1$). 
\end{proof}


\begin{rmk}
We note that our method does not give injectivity on $\pi_n$ outside the stable range $n \leq 2r,$ since for example $\pi_{4r+1}(BU(r)) \cong \Z/(r!)\Z,$ whereas $\pi_{4r+1}(BU) \cong \til{K}_0(S^{4r+1}) = 0.$ However, this injectivity holds for all $n,$ as can be seen from \cite{CasalsSpacil}, by a brief diagram chase from the pair of compatible group extensions (as locally trivial fibrations) \[0 \to S^1 \to \cQ(r) \to \hamcp \to 1,\] and \[0 \to S^1 \to U(r) \to PU(r) \to 1.\]
\end{rmk}

%

Observe that by \eqref{eqn:Quant-Ham-extension} each element $\phi \in \ham$ can be lifted to an automorphism $\wh{\phi}$ of $(L, \nabla^{L} )$ and this lift is unique up to the action of $S^1$. Let 
$
\mathcal{H} ^{gr} (J)$ denote the $\mathbb{Z}_{2} $ graded Hilbert space given by the direct sum of appropriate completions of the Frechet smooth section spaces $\Gamma (L 
\otimes E ^{\pm} , M)$. 
Then $\phi$ induces a well defined element 
\begin{equation} \label{eq-Ham-action}
iso _{\phi}  \in P
U ^{gr} (\cH^ {gr} (J), \cH ^{gr}
(\phi _{*} J )), \end{equation}  the projectivization of the space of graded unitary isomorphisms.
Indeed, this action is induced by the unitary bundle automorphism $D\phi: (TM,J) \to (TM,\phi_* J)$ (not over $id,$ but over $\phi$), and the (well-defined up to the $S^1$-action) map of sections $\Gamma(L) \to \Gamma(L)$ given by $s \mapsto \wh{\phi} \circ s \circ \phi^{-1}.$

Let  ${E}  \ham$ be  the 
universal principal  $\ham $ bundle over $B \ham$. 
Note that a $\ham$ equivariant map $E \ham \to \mathcal{J}$ is equivalent to a family $\{J _{b} \}$ of almost complex structures, $J _{b} $ an almost complex structure on $M _{b}.$ Since $\mathcal{J}$ is contractible, the bundle $E \ham \times_{\ham} \cJ$ has contractible fibers and therefore has a global section. Let \[m:E \ham \to \mathcal{J}\] be the correspoding equivariant map. Denote $m (\phi _{b} )$ by $J_ {\phi _{b}}.$ 

We define Hilbert space bundles $\widetilde{H}^{\pm}  $ over $E\ham
$, with fiber over $\phi _{b} $: $\cH ^{\pm}  (J _{\phi _{b} })$,
defined analogously to $\cH ^{\pm}  (J)$. In other words denoting by $H^{\pm}$ the Hilbert bundles
over $\cJ$ with fiber  $\cH ^{\pm}  (J)$ over $J,$ \begin{equation}\label{eq-tilde-H}\widetilde{H}^\pm = m^* H^{\pm}.\end{equation}

Note that we have a corresponding family of Dirac operators $\{Dir
   ^{1}  (\widehat{M},  J _{\phi _{b} }  )\}$, which we abbreviate by $\{Dir ^{1}
(\widetilde{\cH} _{b}  )\} $.
Set $\widetilde{E} $ to be the graded projective frame bundle of
$\widetilde{H} ^{gr} = \til{H}^+ \oplus \til{H}^- $, that is the fiber of $\widetilde{E} $ over
$\phi _{b} $ is the projectivization $$PU ^{gr} (\cH ^{gr}, \widetilde{H} ^{gr}
_{b}),$$ of the space of graded unitary isomorphisms,  where $\cH
^{gr}$ is a fixed $\Z_2$-graded, infinite-dimensional separable Hilbert space (without loss of generality we assume $\cH^{gr} = \cH^+ \oplus \cH^-,$ with $\cH^+ = \cH^- = \cH$ a fixed separable infinite-dimensional Hilbert space).

We define a $PU ^{gr}  (\cH ^{gr} )$ equivariant map:
\begin{equation*}
\widetilde{Dir}: \widetilde{E} \to  \Fred^{-} (\cH ^{gr}  ),
\end{equation*}
the latter denoting the space of odd Fredholm operators on
$\cH^{gr}$,
by $$\widetilde{Dir} (e _{\phi _{b} } ) =  \widehat {e _{\phi _{b} }}
^{-1}  Dir
^{1}  (\widetilde{\cH} _{b}  ) \widehat {e _{\phi _{b} }}, $$ with
$\widehat {e _{\phi _{b} }}$ denoting any chosen lift of $e _{\phi
_{b}}
      $ to a graded unitary isomorphism, and
where the  $PU^{gr} (\cH ^{gr} )$ action on $\Fred^{-}(\cH ^{gr} )$ is the action:
\begin{equation*}
P \cdot g = \widehat{g} ^{-1}  \circ P \circ \widehat{g},
\end{equation*}
for any chosen lift $\widehat{g} $ of an element of $PU^{gr}(\cH ^{gr} )$, to a
graded Hilbert space isomorphism.

Now we note that $\ham$ freely acts on $\widetilde{E} $ on the left
by $PU ^{gr}(\cH ^{gr} )$ principal bundle automorphisms (not necessarily over $\id$). Explicitly for each $g \in \ham$
we have a graded Hilbert space map  $$\widetilde{H} ^{gr}  _{\phi _{b}} = \cH^{gr} (J
_{\phi _{b} }  ) \to \widetilde{H} ^{gr}  _{\phi _{b} \cdot g ^{-1} }=
\cH^{gr}  ((g ^{-1}) ^{*}  J _{\phi
_{b} } )  $$ well defined up to the action of $S ^{1} $, that is a well defined graded projective Hilbert space map. And moreover $\widetilde{Dir} $ is invariant under this action. Passing to the quotient we get an induced principal $PU ^{gr}  (\cH ^{gr} )$ bundle $\mathcal{P}$ over $B \ham$ and a $PU^{gr} (\cH ^{gr} )$-equivariant map:
\begin{equation*}
   m_{Dir}: \mathcal{P} \to \Fred ^{-}  (\cH ^{gr} ).
\end{equation*}

It is equivalent to a section \begin{equation*}
   Dir \in \Gamma (\mathcal{P} \times_{PU ^{gr} (\cH  ^{gr} )} \Fred ^{-}  (\cH
   ^{gr} )).
\end{equation*}

The homotopy class $[Dir]$ is our \emph{super twisted Dirac K-theory
class}.

\subsection{The splitting isomorphism}

While this is not strictly necessary for subsequent applications, we show here that the above invariant with values in super twisted K-theory can be fully interpreted in terms of classical twisted K-theory. Recall the bundles $\til{H}^+,\til{H}^-$ over $E\ham$ from \eqref{eq-tilde-H}. 

\begin{prop}\label{prop:super via classical}
There is a homotopy-canonical isomorphism \[{s}:\Gamma (\mathcal{P} \times_{PU ^{gr} (\cH  ^{gr} )} \Fred ^{-}  (\cH^{gr} )) \to \Gamma (\mathcal{P}^+ \times_{PU (\cH  ^{+} )} \Fred (\cH^{+} )) \times \Gamma (\mathcal{P}^- \times_{PU (\cH  ^{-} )} \Fred(\cH^{-} )),\] where $\mathcal{P}^+$ and $\mathcal{P}^-$ are the quotients by the $\ham$-action of the projective frame bundles $PU(\cH^+,\til{H}^+)$ and $PU(\cH^-,\til{H}^-),$ and $\mathcal{P}$ is the quotient by the $\ham$-action of the graded projective frame bundle $\til{E} = PU(\cH^{gr},\til{H}^{gr}).$ Moreover, there is a homotopy-canonical isomorphism $\Proj(iso): \mathcal{P}^+ \to \mathcal{P}^-$ under which ${s}(Dir) = (Dir^+,Dir^-)$ takes the adjoint-diagonal form, that is $(Dir^-)^* = \Proj(iso)_* \, Dir^+.$
\end{prop}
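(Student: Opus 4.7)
The proof plan is to realize $s$ via the block-matrix decomposition of odd Fredholm operators at the fiber level, then reshape each factor into the intrinsic form using Kuiper-canonical identifications.

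First I would note that every $D\in Fred^{-}(\cH^{gr})$ has a block form
\[
D=\begin{pmatrix} 0 & D_{-}\\ D_{+} & 0 \end{pmatrix},
\qquad D_{+}\in Fred(\cH^{+},\cH^{-}),\;D_{-}\in Fred(\cH^{-},\cH^{+}),
\]
which gives a homeomorphism $Fred^{-}(\cH^{gr})\cong Fred(\cH^{+},\cH^{-})\times Fred(\cH^{-},\cH^{+})$, equivariant under the $PU^{gr}(\cH^{gr})$-action $[(g^{+},g^{-})]\cdot(D_{+},D_{-})=((g^{-})^{-1}D_{+}g^{+},(g^{+})^{-1}D_{-}g^{-})$. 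The key observation is that this right-hand action is well-defined precisely because the diagonal $S^{1}\subset U(\cH^{+})\times U(\cH^{-})$, by which we quotient to form $PU^{gr}$, cancels out of each block. Applying this splitting fiberwise to the section $Dir$ produces a pair of $PU^{gr}$-equivariant sections valued in the two blocks separately.

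Next I would produce $\Proj(iso)\colon\mathcal{P}^{+}\to\mathcal{P}^{-}$ and a companion unitary $u\colon\cH^{-}\to\cH^{+}$ using Kuiper's theorem twice: the space $U(\cH^{-},\cH^{+})$ is contractible, giving $u$ up to contractible choice, and the bundle $U(\til{H}^{+},\til{H}^{-})\to E\ham$ has contractible fibers and so admits a global section, which descends modulo $S^{1}$ to the required projective bundle isomorphism over $B\ham$. Under $u$, $Fred(\cH^{+},\cH^{-})\cong Fred(\cH^{+})$ via $D_{+}\mapsto u\circ D_{+}$, equivariantly with respect to the induced $PU(\cH^{+})$-action, and symmetrically $Fred(\cH^{-},\cH^{+})\cong Fred(\cH^{-})$. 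Combining these identifications with the reduction of the $PU^{gr}$-twist (which $\Proj(iso)$ and $u$ allow us to replace by the two separate $PU(\cH^{\pm})$-twists on $\mathcal{P}^{+}$ and $\mathcal{P}^{-}$) yields the desired sections $Dir^{+}$ and $Dir^{-}$. The adjoint-diagonal relation $(Dir^{-})^{*}=\Proj(iso)_{*}Dir^{+}$ then follows directly from self-adjointness of the $Spin^{c}$ Dirac operator: by Proposition \ref{Proposition: Dirac Dolbeault}, $Dir^{1}(\til{\cH}_{b})=\sqrt{2}(\delbar+\delbar^{*})$, so $D_{-}=(D_{+})^{*}$ in each fiber, and translating this through the identifications gives the asserted identity.

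The main obstacle will be the bookkeeping of the nested $S^{1}$-ambiguities. The $\ham$-action on $\til{H}^{\pm}$ is only defined projectively, the kernel of $PU^{gr}(\cH^{gr})\to PU(\cH^{+})\times PU(\cH^{-})$ is $S^{1}$, and both $u$ and $\Proj(iso)$ are defined only up to contractible choice. The nontrivial check is that, after using $u$ to identify the two halves, the $PU^{gr}$-equivariance genuinely refines to independent $PU(\cH^{+})$- and $PU(\cH^{-})$-equivariances on the two factors: this is exactly where the diagonal-$S^{1}$ cancellation in $PU^{gr}$, noted in the first paragraph, becomes essential. Everything else is a direct unwinding of definitions.
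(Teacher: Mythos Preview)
Your proposal has the right ingredients---block decomposition of odd Fredholm operators, Kuiper's theorem, self-adjointness of the Dirac operator---but there is a genuine gap in the equivariance/descent step, and this is precisely where the paper's argument diverges from yours.

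The gap is this: you take an arbitrary section of $U(\til{H}^+,\til{H}^-)\to E\ham$ (which exists because the fibers are contractible) and assert that it ``descends modulo $S^1$'' to $\Proj(iso)$ over $B\ham$. It does not: to descend through the $\ham$-quotient the section must be $\ham$-invariant, and a generic section is not. The same problem reappears in your construction of $s$: your fixed model isomorphism $u:\cH^-\to\cH^+$ cannot convert the $PU^{gr}$-action on $Fred(\cH^+,\cH^-)$, which genuinely uses both $g^+$ and $g^-$, into a pure $PU(\cH^+)$-action. The ``nontrivial check'' you flag in your last paragraph is therefore not a check that succeeds---it is exactly the missing construction.

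The paper resolves both issues in one move. First (Lemma~\ref{lma-U-lemma}) it produces a \emph{$\ham$-equivariant} unitary isomorphism $iso:\til{H}^+\to\til{H}^-$ by the quotient trick: the bundle $U(\til{H}^+,\til{H}^-)/\ham\to B\ham$ still has fibers homeomorphic to $U(\cH)$, hence contractible by Kuiper, so it admits a section $\overline{iso}$, which lifts to a $\ham$-invariant section $iso$ upstairs. Second, rather than manipulate the principal $PU^{gr}$- and $PU^\pm$-bundles directly, the paper identifies sections of each of the three associated bundles with $\ham$-equivariant sections of the intrinsic bundles $Fred^-(\til{H}^{gr})$, $Fred(\til{H}^+)$, $Fred(\til{H}^-)$ over $E\ham$; the isomorphism $\til{s}$ is then simply post-composition by $\begin{pmatrix}0&iso^{-1}\\ iso&0\end{pmatrix}$, which is $\ham$-equivariant because $iso$ is. This sidesteps the $PU^{gr}$ versus $PU(\cH^+)\times PU(\cH^-)$ bookkeeping entirely. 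Once you insert the equivariant $iso$ in place of your fixed $u$ and your non-equivariant section, your argument essentially becomes the paper's.

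One minor point: you invoke Proposition~\ref{Proposition: Dirac Dolbeault} for self-adjointness of $Dir^1$, but that identity is specific to the K\"ahler case. The self-adjointness holds in general and is stated earlier in the preliminaries; that general fact is what is needed here.
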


We start with a key technical statement. First we note that $\ham$ acts on the bundle $U(\til{H}^+,\til{H}^-)$ over $E\ham$ of unitary bundle-maps $\til{H}^+ \to \til{H}^-.$ This induces a $\ham$-action on the space $\Gamma(U(\til{H}^+,\til{H}^-))$ of sections of $U(\til{H}^+,\til{H}^-).$ Indeed $\cQ(\wh{M})$ acts on both $\til{H}^+$ and $\til{H}^-$ and the $S^1$-ambiguity in the lift disappears once we act on such unitary maps, whence the action descends to $\ham.$ We say that an isomorphism $iso:\til{H}^+ \to \til{H}^-$ is $\ham$-equivariant if the corresponding section $iso \in \Gamma(U(\til{H}^+,\til{H}^-))$ is invariant under the $\ham$-action.

\medskip

\begin{lma}\label{lma-U-lemma}
There is a homotopy-canonical $\ham$-equivariant isomorphism $iso:\til{H}^+ \to \til{H}^-.$ 
\end{lma}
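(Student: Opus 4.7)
The plan is to construct $iso$ by applying Kuiper's theorem in the $\ham$-equivariant setting. I would work with the fiber bundle $\mathcal{U} = U(\til{H}^+,\til{H}^-) \to E\ham$ of unitary isomorphisms between the two bundles fiberwise, exhibit it as an $\ham$-equivariant bundle with contractible fibers, and then extract $iso$ as a homotopy-canonical equivariant section.

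First I would verify that $\ham$ acts on $\mathcal{U}$ by bundle automorphisms covering the principal $\ham$-action on $E\ham$. For $g \in \ham$ and $\phi_b \in E\ham$, any lift $\wh{g} \in \cQ(\wh{M})$ induces unitary maps $\wh{g}_\pm : \til{H}^\pm_{\phi_b} \to \til{H}^\pm_{\phi_b \cdot g^{-1}}$ coming from its action on $\cL \otimes E^\pm$. Replacing $\wh{g}$ by $e^{i\theta}\wh{g}$ multiplies \emph{both} $\wh{g}_+$ and $\wh{g}_-$ by $e^{i\theta}$, since only the $L$-factor -- not $E^\pm$ -- is affected by the central $S^1$ in \eqref{eqn:Quant-Ham-extension}. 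Hence the conjugation-style formula $g \cdot T = \wh{g}_- \circ T \circ \wh{g}_+^{-1}$ is independent of the chosen lift, and defines a genuine $\ham$-action on $\mathcal{U}.$

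Second I would invoke Kuiper's theorem. The typical fiber $U(\cH^+,\cH^-)$ is, after composition with any fixed unitary isomorphism, a torsor over the contractible group $U(\cH),$ hence itself contractible. The quotient $\mathcal{U}/\ham \to B\ham$ is therefore a fiber bundle with contractible fibers over the paracompact Hausdorff base $B\ham,$ so by a standard Dold-type selection argument it admits a section, and the space of such sections is itself contractible. Any such section lifts uniquely to an $\ham$-equivariant section of $\mathcal{U},$ that is to an $\ham$-equivariant unitary bundle isomorphism $iso: \til{H}^+ \to \til{H}^-$, and the contractibility of the section space is precisely the homotopy-canonicity required.

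The main obstacle is really the first step: one must check carefully that the $S^1$-ambiguity in lifting $g \in \ham$ to $\cQ(\wh{M})$ cancels in the induced action on unitary isomorphisms $\til{H}^+ \to \til{H}^-.$ Once this bookkeeping is in place, the remainder is the usual Kuiper-plus-quotient principle already employed elsewhere in the paper for the auxiliary trivialization choices, and no further geometric input is needed.
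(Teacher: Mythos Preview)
Your proposal is correct and follows essentially the same approach as the paper: pass to the quotient bundle $U(\til{H}^+,\til{H}^-)/\ham \to B\ham$, use Kuiper's theorem to see its fibers are contractible, choose a (homotopy-canonical) section, and lift it back to an $\ham$-equivariant section. The paper places the $S^1$-cancellation argument (your first step) in the paragraph preceding the lemma rather than inside the proof, but the content is identical; if anything, your explanation of why the central $S^1$ acts the same way on both $\til{H}^+$ and $\til{H}^-$ is slightly more explicit than the paper's.
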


\begin{proof}
First note that $\overline{U} = U(\til{H}^+,\til{H}^-)/\ham$ is a fibration over $B\ham$ with fiber homeomorphic to $U(\cH),$ which by Kuiper's theorem is contractible. Indeed the restriction to each fiber $U(\til{H}^+,\til{H}^-)_e$ of $U(\til{H}^+,\til{H}^-)$ over $e \in E\ham$ of the projection map \[P: U(\til{H}^+,\til{H}^-) \to U(\til{H}^+,\til{H}^-)/\ham\] covering the projection $p: E\ham \to B\ham$ is a homeomorphism \[P_e: U(\til{H}^+,\til{H}^-)_e \to \overline{U}_{p(e)}.\] Choose a section $\overline{iso}$ of $\overline{U},$ which is homotopy-canonical, because the fiber of $\overline{U}$ is contractible. Then we define a $\ham$-invariant section $iso$ of $U(\til{H}^+,\til{H}^-)$ by \[iso(e) = (P_e)^{-1} \overline{iso}(p(e)).\]
\end{proof}

Consider the fibrations \begin{equation}\label{eq-fibrations}
\Fred^-(\til{H}^{gr},\til{H}^{gr}),\; \Fred(\til{H}^+,\til{H}^+),\; \Fred(\til{H}^-,\til{H}^-)
\end{equation} over $E\ham.$ Their quotients by $\ham$ are fibrations over $B\ham$ that are in fact isomorphic to \[\mathcal{P} \times_{PU ^{gr} (\cH  ^{gr} )} \Fred ^{-}  (\cH^{gr} ),\; \mathcal{P}^+ \times_{PU (\cH  ^{+} )} \Fred (\cH^{+} ),\; \mathcal{P}^- \times_{PU (\cH  ^{-} )} \Fred (\cH^{-})\] respectively. By the same argument as in the proof of Lemma \ref{lma-U-lemma} the spaces of  global sections of these quotients are isomorphic to the spaces of $\ham$-equivariant sections of the fibrations \eqref{eq-fibrations}. Finally, a $\ham$-equivariant bundle isomorphism \[\til{s}: \Fred^-(\til{H}^{gr},\til{H}^{gr}) \to \Fred(\til{H}^+,\til{H}^+) \times \Fred(\til{H}^-,\til{H}^-)\] is given by post-composition by \[\begin{pmatrix}
 0 & iso^{-1}\\ iso & 0 
\end{pmatrix}.\]

The map $\til{s}$ induces the isomorphism $s.$ Finally, the isomorphism $\Proj(iso)$ is induced immediately by $iso,$ and the adjoint-diagonality, as in the formulation of Proposition \ref{prop:super via classical}, is a direct consequence of the Dirac operator being self-adjoint.

\subsection {The twisted version of Theorem \ref{thm:example}}
Let us now consider the special case $(M, \omega) = (\mathbb{CP} ^{r-1},
\omega) $.
For $k: BU (r) \to B \hamcp$, and $f: B
\to BU (r) $ we would like
to understand the super twisted Dirac $K$-theory class of the pullback by $k \circ
f$ of the universal $\hamcp$ fibration over $B \hamcp$. Call this
pullback by $P _{k \circ f} $, abreviated by $P$ where convenient.
\begin{lma}\label{lemma:supertwisted}
 The super twisted $K$-theory corresponding to a trivializable
principal $PU ^{gr} 
(\mathcal{H} ^{gr} )$ bundle $\mathcal{P} \to B$, is isomorphic to 
  the group of homotopy classes of maps $X \to \Fred ^{-}
(\cH ^{gr} )$, we shall call this $SK   (B) $.
\end{lma}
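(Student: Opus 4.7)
The plan is to directly exhibit the asserted isomorphism using a choice of trivialization, and then verify it descends to homotopy classes. Since by hypothesis $\mathcal{P}$ is trivializable, I would first fix a global trivialization
\[\tau: \mathcal{P} \xrightarrow{\cong} B \times PU^{gr}(\cH^{gr})\]
as principal $PU^{gr}(\cH^{gr})$-bundles over $B.$ This immediately induces an isomorphism of the associated Fredholm fiber bundle
\[\tau_*: \mathcal{P} \times_{PU^{gr}(\cH^{gr})} Fred^-(\cH^{gr}) \xrightarrow{\cong} B \times Fred^-(\cH^{gr}),\]
and sections of the trivial bundle on the right are tautologically in bijection with continuous maps $B \to Fred^-(\cH^{gr}),$ giving an identification of the section space $\Gamma(\mathcal{P} \times_{PU^{gr}(\cH^{gr})} Fred^-(\cH^{gr}))$ with $C(B, Fred^-(\cH^{gr})).$

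Next I would observe that this bijection descends to homotopy classes: a continuous path of sections corresponds, via $\tau_* \times \id_{[0,1]},$ to a homotopy of maps $B \to Fred^-(\cH^{gr}),$ and vice versa under $(\tau_*)^{-1}.$ This yields the set-level isomorphism $SK(B) \cong [B, Fred^-(\cH^{gr})].$ Both sides are then promoted to abelian groups via the direct-sum $H$-space structure on $Fred^-(\cH^{gr}),$ which on the left corresponds to fiberwise direct sum of sections (with respect to a splitting $\cH^{gr} \cong \cH^{gr} \oplus \cH^{gr}$ of the trivialized bundle, which is homotopy-canonical by Kuiper's theorem).

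The only delicate point, which does not pose an obstacle to the existence of an isomorphism as asserted but is worth acknowledging, is the dependence on the choice of $\tau.$ Two trivializations differ by a transition map $g: B \to PU^{gr}(\cH^{gr}),$ so the corresponding two bijections $SK(B) \to [B, Fred^-(\cH^{gr})]$ differ by the pointwise conjugation action of $PU^{gr}(\cH^{gr})$ on $Fred^-(\cH^{gr}).$ Hence the concrete labeling of elements of $SK(B)$ by maps to $Fred^-(\cH^{gr})$ is only natural once $\tau$ is fixed, but the isomorphism class of the target group is intrinsic and independent of $\tau.$ I do not anticipate a significant technical obstacle; the essential content of the lemma is the standard fact that an associated bundle to a trivializable principal bundle is trivial, with homotopy classes of sections then being homotopy classes of maps to the fiber.
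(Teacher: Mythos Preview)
Your argument is correct and in fact more direct than the paper's. The paper does not trivialize $\mathcal{P}$ itself; instead it uses that trivializability of a $PU^{gr}(\cH^{gr})$-bundle is equivalent to vanishing of the Dixmier-Douady class, which in turn is the obstruction to lifting the structure group to $U^{gr}(\cH^{gr})$. Choosing such a lift $\widetilde{\mathcal{P}}_0 \to \mathcal{P}_0$, sections of the associated $Fred^-(\cH^{gr})$-bundle lift canonically to sections of the $U^{gr}$-associated bundle, and \emph{that} bundle is homotopy-canonically trivialized by Kuiper's theorem.

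The payoff of the paper's detour is that it puts the ambiguity in a form tailored to the application that follows: the choices involved are the lifts to $U^{gr}(\cH^{gr})$-bundles, and in the situation of interest (structure group reduced to $PU(r)$) these arise concretely from lifts of a $PU(r)$-bundle to $U(r)$-bundles, on which $Pic(B)$ acts by tensoring with line bundles. Your approach records the ambiguity as a transition map $g: B \to PU^{gr}(\cH^{gr})$; this is equivalent (since $PU^{gr}(\cH^{gr}) \simeq K(\Z,2)$ by Kuiper, so $[B,PU^{gr}(\cH^{gr})] \cong H^2(B;\Z) \cong Pic(B)$), but the link to the subsequent Picard-group argument is less immediate. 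For the lemma as stated, your route is perfectly adequate and arguably cleaner.
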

Unlike (seemingly) the twisted variant, $SK  (B)$ is not any more interesting than
usual K-theory, as there is a canonical isomorphism $s: SK (B) \to K (B) \times K
(B)$, this is obtained as follows. Fix an isomorphism $iso: \cH ^{+}
\to \cH ^{-}  $, then for $[\sigma] \in SK (B)$, define $$s
([\sigma])= ([iso ^{-1} \circ \sigma| _{\cH ^{+} } ], [iso \circ \sigma | _{\cH
^{-} } ]).$$ Let us denote by $s_+$  the first component of $s$.
\begin{proof}[Proof of Lemma \ref{lemma:supertwisted}]
A principal $PU ^{gr} (\cH ^{gr} ) $ bundle $\mathcal{P} _{0} $ is trivializable if and only if its structure group can be lifted
 to $U ^{gr} (\mathcal{H} ^{gr} )$, since the obstruction to such a
 lift is given by the Dixmier-Douady
  class in $H ^{3} (B, \mathbb{Z}) $ classifying $\mathcal{P} _{0
  } $ as a
  $PU ^{gr} 
 (\mathcal{H} ^{gr} )$ bundle. To lift means to construct an epimorphism of principal bundles:
 \begin{equation*}
    mor:\widetilde{\mathcal{P}} _{0}  \to \mathcal{P} _{0} ,  
 \end{equation*}
with the structure group of $\widetilde{\mathcal{P}} _{0}$: $U ^{gr}  (\mathcal{H}
^{gr} )$. In
other words a bundle map $mor$ satisfying the equivariance condition $mor(p \cdot g) = mor (p) \cdot
cov(g)$, for $cov: U ^{gr} (\mathcal{H} ^{gr} ) \to PU ^{gr}  (\cH
^{gr} )$, the canonical map.

Such a lift canonically determines a lift of
a section of 
\begin{equation*}
\mathcal{P} _{0}  \times _{PU ^{gr}  (\mathcal{H} ^{gr} )} \Fred ^{-}  (\mathcal{H}^{gr} ),
\end{equation*}
to a section of 
\begin{equation*}
\widetilde{ \mathcal{P}  } _{0}\times _{U ^{gr}  (\mathcal{H} ^{gr} )} \Fred^{-} (\mathcal{H} ^{gr} ),
\end{equation*}
and since $\widetilde{\mathcal{P}} _{0}  $ is homotopy canonically
trivializable (by Kuiper's theorem),
this determines a well  defined class in $SK (B)$.
\end{proof}
By construction the structure group of $P$ has a reduction to the
subgroup $\underline {k} (U ({r}) ) \simeq PU (r)$ for  
$\underline {k}: U (r) \to \hamcp$ the natural homomorphism. We denote
the principal $\hamcp$  bundle associated to $P$ by $EP.$  
So reduction of structure group means that we have a morphism (which
is an embedding) of
principal bundles:
\begin{equation*}
   emb: \underline {EP} \to EP,
\end{equation*}
with $\underline {EP}$ a principal $PU (r)$ bundle, with respect to the
natural homomorphism $PU (r) \to \hamcp$.
Since we have an equivariant map $\underline {EP} \to \mathcal{J}$,
given by the constant map to $J_0$ the standard integrable $PU (r)$
invariant complex structure, on $ \mathbb{CP} ^{r-1} $, 
this may be extended (uniquely by equivariance) to a map $m: EP \to \cJ$, so that its
restriction to image of $\underline{E P} $ is the constant map to
$J_0$. 

We identify for convenience $\cH ^{gr} $ with $\cH ^{gr} (J_0)$.
So $$\mathcal{P} _{f}  \equiv  (k \circ f) ^{*} \mathcal{P} \simeq (emb
^{*} (k \circ f) ^{*} \widetilde{E})/PU (r), $$ which has obvious
lifts to $U ^{gr}  (\cH ^{gr} )$ bundles determined by lifts of
$\underline {EP}$
to $U (r)$ bundles. The simplest way to see this is to note that
$\mathcal{P} _{f}$ is exactly the principal $PU ^{gr}(\cH ^{gr} ) $
bundle associated to $\underline {EP}$, and the natural homomorphism $hom: PU (r) \to PU ^{gr}(\cH ^{gr}
) $, and since the latter is covered by $\widetilde{hom}: U (r) \to U ^{gr} (\cH ^{gr} ) $, we
get that for 
 an equivariant bundle map
$\widehat{EP} \to \underline{EP}  $, with $\widehat{EP} $ a $U (r)$ bundle, the 
$U ^{gr} (\cH ^{gr} ) $ bundle associated to $\widehat{EP} $ and the
homomorphism $\widetilde{hom} $, covers $\mathcal{P}_{f} $.


 In
general there is an action of the automorphism group of the trival
$PU ^{gr} (\cH ^{gr} )$ bundle over $B$, $Aut (PU ^{gr}  (\cH ^{gr} ) \times B)$, on the set of possible lifts. 
And $\pi _{0} Aut (PU ^{gr}  (\cH ^{gr} ) \times B) \simeq Pic (B)  $, the Picard group of
$B$. While this is not crucial, in our particular case, it is  possible to see the action of
the Picard group more directly, simply because the Picard group of $B$ acts
on the set of lifts of $\underline{EP}$ to $U (r)$ bundles by tensoring with the
line bundles.


Recall that we denote the univeral $\widehat{\C P} ^{r-1}  $ bundle over $B \cQ$
by $\widehat{P} _{B \cQ}  $. It follows by a straightforward
calculation from the above
discussion that  $s _{+} ([Dir (k \circ f)])$,
for $[Dir (k \circ f)]$ the pullback of super twisted Dirac $K$-theory
class of $P _{B\hamcp} $ by $k \circ f$, can be naturally identified
with the  class $H ^{1}(\widehat{P} 
_{e \circ f}) $ of $ \widehat{P} 
_{e \circ f} = (e \circ f) ^{*} \widehat{P} _{B \cQ}  $, up to the action of the Picard group of $B$,
on
$K$-theory. And by Theorem \ref{thm:example} $H ^{1}(\widehat{P} 
_{e \circ f}) $ is 
just the $K$-theory class of the vector bundle $E _{f} $, classified
by $f$.
\begin{proof} [Proof of Thereom \ref{thm:twistedmain}]
Set $twq$ to be the map taking $[k \circ f]$ to $s_+ ([Dir   (k \circ
f)])$, then our theorem follows immediately by the discussion above.
\end{proof}


\section{Discussion}\label{section:discussion}
It would be very interesting to generalize the $K$-theory quantization to strict contactomorphism groups $\Cont_0(N,\al)$ where the Reeb field $R=R_\al$ defined by $\iota_R d\al = 0, \iota_R \al = 1$ no longer generates a free $S^1$-action, that is when $(N,\al)$ is not a prequantization space. Moreover, Casals-Spacil show that for the standard contact sphere $(S^{2r-1},\xi_{st}),$ there is an injection of homotopy groups $\pi_*(U(r)) \to \pi_* (\Cont_0(S^{2r-1},\xi_{st})),$ of the homotopy of $U(n)$ to the homotopy of the full contactomorphism group $\Cont_0(S^{2r-1},\xi_{st}).$ Hence it would be very interesting to try extending the $K$-theory quantization to full contactomorphism groups and showing an analogue of Theorem \ref{thm:example} for this case. To this end, and in its own right, it would be interesting to see how the constructions of this paper can be cast in terms of operator algebras - cf. \cite{MathaiMelroseSinger0,MathaiMelroseSinger1,MathaiMelroseSinger2} and \cite{ConnesSkandalis}. 

\begin{rmk}\label{Remark: Actual vector bundle}
It was shown by Ma-Marinescu that $\ker (D^k_{-})_{J_x} \cong \mathrm{coker} (D^k_{+})_{J_x} = 0$ for all $k > C(J_x),$ where the function $C$ of the almost complex structure is continuous in the $C^\infty$-topology. This implies that whenever the base $B$ is compact, for all $k$ large enough, the class $H^k$ in $K$-theory is in fact represented by a genuine vector bundle $V^k$ over $B$ with fiber $\ker (D^k_{+})_{J_x}$ over $x \in B.$
\end{rmk}
\begin{rmk}
In fact, whenever $B$ is a closed smooth manifold, given a choice of a smooth section $J \in \Gamma(\cJ_{\cM}, B),$ a prequantum connection $\cA$ on a prequantum fibration\footnote{Here we use the alternative description using principal $S^1$-bundles and connection forms.} $\cP$ over $B$, namely one with parallel transport preserving the contact one-forms on the fibers of $\cP \to B$, is expected to induce a sequence of unitary connections $A^k$ on the vector bundles $V^k.$ Moreover, using the theory of Fourier integral operators of Hermite type, one expects to prove the correspondence principles of the following type. The authors plan to carry this out in detail elsewhere.
\medskip
\begin{enumerate}
\item $\frac{1}{k}||R(A^k)||_\infty \to ||R(\cA)||_\infty$ as $k \to \infty,$ where $R$ denotes the curvature, and the norms are supremum norms over $T^1 B \times T^1 B$ (the square of the unit tangent bundle of $B$ with respect to an auxiliary Riemannian metric on $B$) of natural conjugation invariant norms on the Lie algebra - the operator norm on $Lie(U(n))$ and the $C^0$-norm on $Lie (\Cont(\cP_x,\alpha_x)) \cong \sm{\cM_x,\R}.$

\item $\displaystyle[\trace (e^{t R(A^k)/(k \rank V^k)})]_m \to [\int_{fiber} (e^{t R(\cA)/\vol(M,\om)})]_m,$ as $k \to \infty,$ where $m$ denotes the coefficient of $t^m$ in the power series, which is a $2m$-form on $B$ that is closed by Chern-Weil theory (cf. \cite{SpacilThesis} for the appropriate infinite-dimensional setting). 
\end{enumerate}

In particular we obtain the convergence of certain expressions in the real (and hence rational) Chern classes of $V^k \to B$ to the Reznikov-Spacil classes of $\cP \to B.$ We note that using the Atiyah-Singer index theorem for families, one can compute the Chern character of $V^k,$ and taking limits as $k \to \infty$ obtain new expressions for the Reznikov-Spacil classes. In the special case when $B = S^2,$ isomorphism classes of prequantum fibrations with fiber $(P,\al)$ over $B$ correspond to free homotopy classes of loops in $\Cont(P,\al),$ which in turn correspond to $\pi_1 \Cont(P,\al),$ because this fundamental group is abelian, and the Reznikov-Spacil class $\int R(\cA) / \vol(M, \om)$ corresponds to the Calabi-Weinstein invariant \cite{WeinsteinCalabi}, defined as \[cw(\gamma) = \int_0^1 dt \int_M H_t \om^n,\] where $H_t$ is a contact Hamiltonian of a smooth loop in $\Cont(P,\al)$ representing $\gamma.$ This suggests the following statement, which was not known to the authors, and to the best of their knowledge does not appear in the literature (save for the special case of prequantizations of $\C P^n$ \cite{LoopRemarks}).

\begin{prop}\label{prop: cw quantized}
The Calabi-Weinstein invariant $cw: \pi_1 \Cont(P,\al) \to \R$ satisfies \[cw(\gamma)/\vol(M, \om) = \frac{1}{n+1} \int_{\cM} c^{n+1},\] where $\cM$ is the Hamiltonian fibration underlying any prequantum fibration $\cP \to S^2$ corresponding to $\gamma,$ and $c$ is the first Chern class of the complex line bundle $\cL \to {\cM}.$ In particular, if $\vol(M, \om) = 1,$ then $cw$ takes values in $\frac{1}{n+1} \Z.$
\end{prop}

This fact has a short proof using the usual finite-dimensional Chern-Weil theory for $\cL \to \cM.$ Indeed, one can construct $\cP$ by the clutching construction from two copies $P \times {\D_-}, P \times {\D_+}$ of the trivial fibration $P \times \D$ over the standard disc $\D,$ where a choice of a connection form on $\cP \to \cM$ (or a unitary connection form on $\cL \to \cM$) is constructed from $\al$ and the $1$-periodic contact Hamiltonian $H_t$ of a loop in $\Cont (P,\al)$ representing $\gamma$ as follows (see e.g. \cite{LeonyaActionMaslov}). On $P  \times {\D_-} \to M \times {\D_-}$ take the connection form $\al,$ and on $P  \times {\D_+} \to M \times {\D_+}$ take $\al + d(\rho(r) H_t) \wedge dt,$ where $r$ is the radial coordinate on the disc, $t \in S^1$ is the angular coordinate, and $\rho: [0,1] \to [0,1]$ is a smooth function with $\rho(r) \equiv 0 $ for $r < \epsilon,$ and $\rho(r) \equiv 1$ for $r > 1 -\epsilon,$ where $\epsilon$ is a very small number. Since by Chern-Weil theory this connection form represents $c,$ the calculation is now immediate. 

\end{rmk}

\begin{proof}[Proof of Proposition \ref{Proposition:DD class torsion}]

Using Proposition \ref{prop: cw quantized}, we construct a central extension \[1 \to \mu_{N(P,\al)}  \to
\wh{\Ham}(M,\om) \to \Ham(M,\om) \to 1,\] with  $\wh{\Ham}(M,\om)$ 
a subgroup of $\cQ(P,\al)$, where $\mu_{N(P,\al)} \cong \Z/ N (P,\al)
\Z$ is the group of roots of unity of order $N{(P,\al)} \in \Z_{>0},$
a divisor of $n+1$. Moreover, this extension commutes with the
inclusions $\mu_{N{(P,\al)}} \to S^1$, $\wh{\Ham}(M,\om) \to \cQ(P,\al),$ and the extension \eqref{eqn:Quant-Ham-extension}. From this point the proof is immediate (cf. \cite[Proof of Propostion 2.1 (iv)]{AtiyahSegalTwistedK}). Essentially, the $S^1$-valued cocycle $c_{ijk}$ defining the Dixmier-Douady class can, by means of the new central extension, be chosen to be $\frac{1}{N(P,\al)} \Z / \Z$-valued, hence being $(n+1)$-torsion, and the same is true for its image under the connecting map $H^2(B,S^1) \to H^3(B,\Z).$

To construct $\wh{\Ham}(M,\om)$, first note that the natural Lie algebra
morphism $Lie \Ham (M, \omega) \cong C^\infty_0(M,\R) \to C^\infty(M,\R) \cong Lie \cQ (P,
\al)$ induces a section $i:\til{Ham}(M,\om) \to \til{\cQ}(P,\al)$ to the universal cover level of extension \eqref{eqn:Quant-Ham-extension}: \[0 \to \R \to \til{\cQ}(P,\al) \to \til{Ham}(M,\om) \to 0.\] Define $\wh{\Ham}(M,\om)$ as the image of $\pi \circ i$ where $\pi: \til{\cQ}(P,\al) \to \cQ(P,\al)$ is the canonical endpoint-projection homomorphism. 

It remains to show that the natural surjection $\wh{\Ham}(M,\om) \xrightarrow{pr} \Ham(M,\om)$ has finite kernel. Put $\overline{cw} = cw/\vol(M,\om).$ We construct an injection $0 \to \ker(pr) \xrightarrow{\kappa} \frac{1}{n+1} \Z / \Z = \mu_{n+1}.$ Indeed let $\wh{\phi} \in \ker(pr).$ Then there exists a lift $\til{\wh{\phi}} = i(\gamma) \in \til{\cQ}(P,\al)$ where $\gamma \in \pi_1(\Ham(M,\om)).$ In particular $\overline{cw} (\til{\wh{\phi}}) = 0.$ On the other hand, since $pr(\wh{\phi}) = 1,$ $\wh{\phi} = \phi^\theta_R,$ the Reeb flow for certain time $\theta \in \R.$ Catenating the inverse of the path representing $\til{\wh{\phi}}$ with the path $\{\phi^{t\theta}_R\}_{t=0}^1$ we obtain a class $a$ in $\pi_1 \cQ (P,\al),$ with $\overline{cw}(a) = \theta.$ However by Proposition \ref{prop: cw quantized} this implies that $\theta \in \frac{1}{n+1} \Z.$ It is now immediate to see that the class $\kappa(\hat{\phi}) = [\theta] \in \frac{1}{n+1} \Z/ \Z$ gives a well-defined homomorphism  $\ker(pr) \to \frac{1}{n+1} \Z / \Z,$ which is moreover an injection.

\end{proof}

\section{Methods of infinity-categories}\label{sect:infinity-categories}

In this section we prove Theorem \ref{Proposition: homotopy equivalence commuting}, using methods of infinity-categories to take a suitable homotopy colimit of the maps $p_r: BU(r) \to \Fred_0,$ along the natural maps $BU(r) \to BU(r')$ for $r \leq r'.$

\subsection {Homotopy  colimits} \label{section:hcolim}
For a nice succinct overview of the theory of homotopy (co)limits and model
categories see Riehl \cite{citeRiehlAmodelstructureforquasi-categories}. We shall avoid generalities of model categories and deal purely
with $Top$: the category of paracompact Hausdorff topological spaces.
This theory in effect attempts to solve a pair of not obviously related
problems. First for $D$ a small diagram category, and
\begin{equation*}
   colim: Top ^{D} \to Top,
\end{equation*}
the colimit functor, one would like to have a left total derived functor 
\begin{equation*}
   hocolim: \ho (Top  ^{D})  \to \ho Top
\end{equation*}
for $\ho (Top ^{D}) $ (not to be confused with $(\ho Top)^{D} $) the
homotopy category of $Top ^{D}$: it is just the localization of $Top
^{D} $ with respect to the class of morphisms (here just natural transformations) which
are object wise weak equivalences. Very briefly, as our readers may
not be familiar with these notions, $hocolim$ is the right Kan extension of
$\delta \circ colim$ along $\gamma$ for
$\gamma: Top ^{D} \to \ho Top ^{D}  $, $\delta: Top \to ho Top $ the
localization functors, which in turn means an arrow $R$ in the diagram:
\begin{equation*}
   \xymatrix{Top ^{D} \ar [d] ^{\gamma}  \ar[r]^{colim} & Top \ar [d]
   ^{\delta} \\
\ho (Top ^{D} ) \ar @{-->} [r] ^{R}  &  \ho Top}, 
\end{equation*} 
together with a distinguished natural transformation from $R
\circ \gamma$ to $\delta \circ colim$, which is co-universal in the
natural sense, see also \cite[Section 2]{citeShulman}. 

In this
paper our category $D$ will have as objects the natural numbers, with a
unique morphism $[i,j]$ from $[i]$ to $[j]$ if $j>i$. Again without going into
generalities we point out that if
 $F \in Top ^{D}$ takes all morphisms  in $D$ to cofibrations in $Top$,  then
$$hocolim F = colim F,$$ (for our specific $D$). The specific property of $F$ which ensures this is
being Reedy cofibrant, see Hirschhorn \cite[Chapter 15]{citeHirschhornModelCategories}. 

For the example of this paper, let  
\begin{equation} \label{eq:F}
F: D \to Top
\end{equation}
 be the functor $F ([r])=BU
(r)$, which takes the morphism $[r,r+1]: [r] \to [r+1]$ to the map $BU (r) \to BU
(r+1)$, which is defined 
%
by sending a subspace $H$ to the subspace $H \oplus \mathbb{C} \cdot v _{H} $,
for $\mathbb{C} \cdot v _{H}
$  denoting the subspace generated by $v _{H} $, where $\{v _{H} \}$ is a
chosen continuous family of non-zero elements of $\mathcal{H}$, s.t. each $v
_{H} $ is orthogonal to $H$. Such a family exists since it can be obtained as a
section of a fibration over $BU (r)$, with fiber over $H$ the space of unit
vectors in $\mathcal{H}$ orthogonal to $H$. The fiber is contractible so a
section exists.
 The maps $F ([r,r+1])$ are
cofibrations  and so  in this case
$$hocolim F = BU,$$ as explained above.


The above describes the so called global point of view of the homotopy colimit, and
the output in our specific example. However we may also want a notion of
homotopy colimits, characterized via a homotopical version of the universal
property characterizing the usual colimits, indeed this is what is really
needed in our paper. This is sometimes also called the \emph{local} description
of the homotopy colimit.
In the context of $Top$  the necessary
notion is given by Vogt \cite{citeVogtHomotopyLimitsandColimits}, in terms of homotopy coherent diagrams.
The main point is that these notions of homotopy colimits coincide in our
setting, so that we may compute the local homotopy colimit via the global one.
We shall not state the theorems that imply this claim in absolute generality,  and instead refer the reader to Shulman,
\cite[Sections 8, 10]{citeShulman}. 

While fairly elementary, Vogt's construction is better understood and presented
via the modern theory
of quasi-categories particularly after Joyal and Lurie, simply because
it fits nicely with classical intuition on categories and spaces. This theory is also
 is beautifully documented:
 all of what we
need is contained in the beginning sections of Lurie's
\cite{citeLurieHighertopostheory.}.
Extremely  briefly (without definitions, for which we refer the reader to the
last reference): an $\infty$-category $\mathcal{C}$ is Lurie's abbreviation for an
$(\infty,1)$-category, or more simply a quasi-category, i.e. a simplicial set
satisfying a relaxation of the Kan condition.  From now on
calligraphic letters like $\mathcal{C}$ denote quasi-categories. For $D$ an
ordinary small 
category by a \emph{$D$ shaped diagram} in
$\mathcal{C}$ we mean a simplicial map:
\begin{equation*}
   F: N (D) \to \mathcal{C},
\end{equation*}
where $N (D)$ denotes the simplicial set: nerve of $D$. 
The topological category $Top
$ gives rise to a simplicially enriched category $Top ^{\Delta} $ with morphism
objects $hom _{Top ^{\Delta} } (a,b) = Sing (hom _{Top} (a,b) ) $ for $Sing$ the
singular set functor from the category of spaces to the category of simplicial
sets, and where $hom _{Top} (a,b) $ is the space of maps with the
compact open topology.

The simplicial nerve of $Top ^{\Delta} $, see \cite[Section 1.1.5]{citeLurieHighertopostheory.},
is a
quasi-category we call $\mathcal{T}$. For example its 2-simplices consist of:
topological spaces $T_0, T_1, T_2$, maps $m _{i,j}: T_i \to T_j $, $0 \leq i<j
\leq 2$ and a chosen homotopy from $m _{1,2} \circ m _{0,1} \to m _{0,2}   $,
and so on.
A \emph{co-cone on a $D$ shaped diagram in $\mathcal{C}$} is a simplicial map:
\begin{equation*}
   C _{R}  (N (D)) \to \mathcal{C},
\end{equation*}
naturally extending $F$,
where $C _{R}  (N (D))$ denotes the right cone on the simplicial set $N (D)$, 
i.e. the natural analogue of a topological cone, where there is a 1-edge from
each vertex of $N (D)$ to the cone vertex. See Lurie
\cite [Section 1.2.8]{citeLurieHighertopostheory.} for more details.
The colimit of $F: N (D) \to  \mathcal{C}$ can be interpreted as the universal
co-cone, note however that ``the'' is a little misleading since colimits form
a natural contractible space, rather than being unique. 

\subsection {Proof of Theorem \ref{Proposition: homotopy equivalence commuting}}
Given the setup of the previous section, our strategy is then the following. 
For $D$ and $F: D \to Top$  as in \eqref{eq:F},
we shall first
construct a certain co-cone on $NF: N (D) \to \mathcal{T}$ with cone vertex $c$ mapping to $\Fred _{0}
  (\mathcal{H}) $, and then use that there is a map to this co-cone from the universal co-cone
  with cone vertex $c$ mapping to $BU$, to deduce our claim. The fact that for
  the universal co-cone, the vertex $c$ maps to $BU$ follows by
  \cite[Theorem 4.2.4.1]{citeLurieHighertopostheory.} as it 
   is the
   (local) homotopy colimit of $F$, and so must coincide with (global) homotopy colimit of
  $F$ obtained from the left derived functor, as previously outlined.

For a vertex $[r] \in N (D) $ we map the unique edge $m _{r}:[r] \to c$ to the edge in $\mathcal{T}$ (for $\mathcal{T}$ the quasi-category associated to $Top$ as above) corresponding to the map $p _{r}: BU (r) \to \Fred _{0} (\mathcal{H})$. Then we claim that there is an extension of this to a
map $$L _{\Fred_0 (\mathcal{H})} : C _{R} (N (D)) \to \mathcal{T}.$$ Let us consider 2-simplices of $C _{R}
(N (D) ) $ two of whose edges are $m _{r},m _{r+1}  $. To extend our map to
such a 2-simplex we need to prescribe a homotopy from $$p _{r+1} \circ F([r,
r+1]): BU(r) \to \Fred_0(\cH)$$ to $p _{r}: BU(r) \to \Fred_0(\cH)$. 
For $H \in BU (r)$, let $O _{t} (H) \in \Fred_0(\cH)$ be defined as follows. 
For $v_{H} \in \mathcal{H}$   as in the definition of $F ([r, r+1])$
 the operator $O _{t} (H) \in \Fred_0(\cH)$
is defined to coincide with $p _{r+1} \circ F([r,
r+1]) (H)$ on the orthogonal complement in $\cH$ to $v _{H} $ and taking $v
_{H} $ to the image under $iso_{r+1}: \cH \oplus \C^{r+1} \to \cH$ of $$t 
\cdot (0, e_r) \in \mathcal{H} \oplus \mathbb{C}^{r+1}, $$ for $e _{r} $ any
fixed non zero vector in $\mathbb{C}^{r} $. Clearly $$O _{0} (H)= p _{r+1} \circ F([r,
r+1]) (H),$$ and  $O _{1} (H)$ has the same kernel as $p _{r} (H) $, and
naturally isometric image. Specifically we may find an isometry $$\mathcal{I}
_{r}: \cH \to \cH $$ so that $\mathcal{I}_{r} \circ O_1 (H) $ has the same
image as $p _{r} (H) $ for every $H$. We then define a homotopy 
$${ht} _{t}  = \mathcal{I}_{r,t} \circ O _{t}$$ where $\{\mathcal{I} _{r,t}\} $, $0
\leq t \leq 1$ is any homotopy from $id$ to $\mathcal{I} _{r} $. Then $ht _{1} 
(H)$
has the same kernel and image as $p _{r} (H) $. (By Kuiper's theorem the space
of such homotopies is non-empty and contractible.)
Next set
$\widetilde{ht} _{t}  $ to be any correction continuous in $H$ of the homotopy $ht _{t} $ so that
$$\widetilde{ht}_ {0} = p _{r+1} \circ F ([r, r+1])   $$ and so that 
\begin{equation*}
   \widetilde{ht} _{1} = p _{r}.
\end{equation*}
Such a correction exists since
 by Kuiper's theorem the space of Fredholm
 operators with the same  kernel and image are contractible, and hence the corresponding bundle over $BU(r)$ has contractible fibers.
Using Kuiper's theorem and the fact that the spaces of frames in $\mathcal{H}$ are contractible we may then similarly extend our map to the higher simplices of $C _{R} (N (D))$,
to obtain a map  $$L _{\Fred _{0} (\mathcal{H}) } : C _{R} (N (D)) \to
\mathcal{T}.$$

Passing to the
homotopy categories (of the simplicial sets, as defined in
\cite[Section 1.2.3]{citeLurieHighertopostheory.}) this induces a co-cone that we denote
by $\ho L _{\Fred _{0} (\mathcal{H}) } $ on $$\ho F: D \to \ho \mathcal{T} = \ho
Top$$ with vertex
$\Fred _{0} (\mathcal{H}) $, (this is just the classical co-cone, i.e. a natural
transformation in $\ho Top$ to the constant functor). We also have ``the'' universal co-cone 
$$L _{BU}:  C _{R} (N (D)) \to
\mathcal{T},$$ with the associated classical co-cone $\ho L _{BU}$ on  $D \to
\ho Top. $

Now by definition of colimit as outlined in the previous section and maps of
co-cones, (see \cite [Section 1.2.9]{citeLurieHighertopostheory.}) there is 
a morphism (again in the obvious classical sense) $$\ho L _{BU} \to \ho
L _{\Fred _{0} (\mathcal{H}) }, $$ which in particular induces a map $$W: BU \to \Fred
_{0} ( \mathcal{H}).$$ We need to check that it is a weak equivalence. 
It can be readily shown as for example in Atiyah's book \cite[Proposition A.5]{AtiyahBook} that 
for any $f: B \to \Fred _{0} (\mathcal{H})  $, with $B$ compact there is a homotopy representative
$f'$ so that the kernel spaces $\{\ker f' (s)\}$ form a family of subspaces of
$\mathcal{H}$ of
the same  dimension, and so that the cokernels of $\{f' (s)\}$ are a fixed
subspace of $\mathcal{H}$. Then by construction it readily follows that $W$ is
a isomorphism on homotopy
groups of $\Fred _{0} (\mathcal{H}) $.
\qed

\section*{Acknowledgements}
We are grateful to Leonid Polterovich for introducing us to quantization from the viewpoint of symplectic topology. The work on this paper has started and was carried out mostly during
the authors' stay in CRM, University of Montreal, and also during the
stay of Y.S. at ICMAT, Madrid under funding from Severo Ochoa grant, and the stay of E.S. at the Hebrew
University of Jerusalem, under funding from ERC grant no. 337560, and at Universite Lyon $1$ Claude Bernard, under funding from ERC grant no. 258204. 
We thank these institutions for their warm hospitality. We thank Octav Cornea and Francois Lalonde for their continuous support, and we thank Jake Solomon for inviting Y.S. for a visit to the Hebrew University, which was pivotal to this project. We thank Joseph Bernstein, Dusa McDuff, and Fran Presas for their interest, and Jarek Kedra for useful comments. Finally, we thank the referees for their help with the exposition.

 \bibliographystyle{siam}  
 \bibliography{link} 

\medskip

\end {document}